\newtheorem{thm}{Theorem}[section]
\newtheorem{cor}[thm]{Corollary}
\newtheorem{prop}[thm]{Proposition}
\newtheorem{lem}[thm]{Lemma}
\theoremstyle{definition}
\newtheorem{defn}[thm]{Definition}
\theoremstyle{remark}
\theoremstyle{definition}
\theoremstyle{definition}
\theoremstyle{definition}
\numberwithin{equation}{section} 
\title{Chamber structure of modular curves $X_{1}(N)$}
\author{Guillaume Tahar} 
\address[Guillaume Tahar]{Institut de Math{\'e}matiques de Jussieu - UMR CNRS 7586}
\email{guillaume.tahar@imj-prg.fr}
\date{February 5, 2019}
\keywords{Translation surface, Walls-and-chambers structure, Flat structure, Modular curves}
\begin{document}
\begin{abstract}
Modular curves $X_{1}(N)$ parametrize elliptic curves with a point of order $N$. They can be identified with connected components of projectivized strata $\mathbb{P}\mathcal{H}(a,-a)$ of meromorphic differentials. As strata of meromorphic differentials, they have a canonical walls-and-chambers structure defined by the topological changes in the flat structure defined by the meromorphic differentials.
We provide formulas for the number of chambers and an effective means for drawing the incidence graph of the chamber structure of any modular curve $X_{1}(N)$. This defines a family of graphs with specific combinatorial properties. This approach provides a geometrico-combinatorial computation of the genus and the number of punctures of modular curves $X_{1}(N)$. Although the dimension of a stratum of meromorphic differentials depends only on the genus and the numbers of the singularities, the topological complexity of the stratum crucially depends on the order of the singularities.\newline
\end{abstract}
\maketitle
\setcounter{tocdepth}{1}
\tableofcontents

\section{Introduction}

Very little is known about the topology of strata of holomorphic differentials on compact Riemann surfaces. The conjecture of Kontsevich and Zorich that strata are $K(\pi,1)$-spaces is wide open, see \cite{KZ}. There are some partial results for hyperelliptic components or low genus strata, see \cite{LM}. Contrary to the situation of holomorphic differentials where there are a finite number of strata of given genus, there are infinitely many strata of meromorphic differentials. In genus zero, projectivized strata are configuration spaces of points on the sphere so the topology is well understood. In this paper, we focus on the first interesting case, the family of strata $\mathcal{H}(a,-a)$ where $a\geq2$. Such strata are moduli spaces of pairs $(X,\phi)$ up to biholomorphism where $X$ is a Riemann surface of genus one and $\phi$ is a meromorphic $1$-form with exactly one zero of order $a$ and a pole of order $a$. Assuming genus is nonzero, strata $\mathcal{H}(a,-a)$ are the only strata of meromorphic differentials whose complex dimension is two. Therefore, projectivized strata $\mathbb{P}\mathcal{H}(a,-a)$ are complex curves whose genus and number of punctures can be computed. Just like the modular curve $X_{1}(1)$ is identified with the space of flat tori with a marked point (the case $a=0$), modular curves $X_{1}(a)$ are identified with connected components of projectivized strata $\mathbb{P}\mathcal{H}(a,-a)$ for $a\geq2$. Projectivized strata are the quotients of the strata by the action of $\mathbb{C}^{*}$ \newline

Since strata of meromorphic differentials have a natural walls-and-chambers structure (see \cite{Ta}), their topology may be grasped through a topologico-combinatorial study. Strata $\mathcal{H}(a,-a)$ are simple enough to allow complete determination of their topological features. For every value of $a$, we can draw the incidence graph of the chambers. This provides the first step of a systematic cartography of strata of meromorphic differentials. Incidentally, we get a new way to compute the genus and the number of punctures of modular curves $X_{1}(a)$ that does not make use of number theory or algebraic geometry.\newline
In our context, chambers of projectivized strata are topological disks (with, in some cases, a puncture inside) and walls are real codimension one submanifolds which are affine in the period coordinates. In other words, walls are straight lines. They meet each other at the punctures of the algebraic complex curve.\newline

Many quantities associated to strata only depends on the genus and the number of singularities rather than on their orders. For example, the complex dimension of a stratum $\mathcal{H}(a_1,\dots,a_n,-b_1,\dots,-b_p)$ of meromorphic differentials with zeroes of order $a_1,\dots,a_n$ and poles of order $b_1,\dots,b_p$ on a surface of genus $g$ is $2g+n+p-2$, see \cite{BS}. The same quantity is the lower bound on the number of saddle connections for surfaces of this stratum, see \cite{Ta}. On the contrary, in this paper, we show that the genus of $\mathbb{P}\mathcal{H}(a,-a)$ grows quadratically in $a$. Therefore, the topological complexity takes into account the order of the singularities.\newline
It should be noted that the dimension of a stratum of meromorphic differentials is $2g+n+p-2$ whereas the dimension of a stratum of holomorphic differentials is $2g+n-1$. Indeed, the sum of the residues at the poles is zero so there is automatically an additional relations between the periods of the differential.

\section{Statement of main results}

Connected components of strata of meromorphic differentials were classified by Boissy in \cite{Bo}. Two flat surfaces of $\mathcal{H}(a,-a)$ belong to the same connected component if and only if they have the same rotation number. The rotation number is defined in terms of indices of loops in the surface.

\begin{defn}
Let $\gamma$ be a simple closed curve in a translation surface with (or without) poles. $\gamma$ is parametrized by arc-length and passes only through regular points. We set $\gamma'(t)=e^{i\theta(t)}$.\newline
We have $\dfrac{1}{2\pi}\int_{0}^{T} \theta'(t)dt \in \mathbb{Z}$. This number is the topological index $ind(\gamma)$ of $\gamma$.
\end{defn}

In particular, the topological index of a simple closed curve around a singularity of order $k$ is $1+k$.\newline

The rotation number of a flat surface $X$ of $\mathcal{H}(a,-a)$ is $rot(X)=gcd(a,ind(\beta),ind(\gamma))$ where $\beta$ and $\gamma$ are simple loops forming an homology basis ($X$ is a torus). The rotation number is a topological invariant of connected components of strata. Boissy proved in Theorem 1.1 of \cite{Bo} that the rotation number completely classifies connected components of $\mathcal{H}(a,-a)$. There is a connected component $\mathcal{C}_{d}^{a}$ for every integer divisor $d$ of $a$ except $a$ itself. We denote by \textit{principal component} of the stratum the component where the rotation number of the flat surfaces is $1$.\newline

In the following theorem, we prove that the isomorphism type of connected components depends only on the ratio between the order of the singularities and the rotation number of the component. Therefore, we will be allowed to focus on the principal connected component of each stratum, that is the connected component where the rotation number is equal to one.\newline

For any $N \geq 2$, modular curves $X_{1}(N)$ are compactifications of the quotient of the upper half-plane $\mathbb{H}$ by the congruence groups $\Gamma_{1}(N)=\left \{ \begin{pmatrix} a & b \\ c & d\end{pmatrix}  \in SL_{2}(\mathbb{Z}) : \ a\equiv d\equiv 1\mod N, c\equiv 0 \mod N \right \}$, see \cite{DS} for more details.\newline

\begin{thm} For any $a \geq 2$, every connected component $\mathcal{C}_{d}^{a}$ of $\mathbb{P}\mathcal{H}(a,-a)$ is biholomorphic to the modular curve $X_{1}\left( \dfrac{a}{d}\right) $.\newline
\end{thm}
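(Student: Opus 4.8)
The first task is to construct the biholomorphism explicitly. Given a flat surface $X \in \mathcal{H}(a,-a)$ in the component $\mathcal{C}_d^a$, I want to produce an elliptic curve together with a point of exact order $a/d$. The natural candidate: the flat surface is a genus-one translation surface, so after forgetting the differential we get an elliptic curve $E = \mathbb{C}/\Lambda$, and the zero $P$ and pole $Q$ of the differential $\omega$ are two marked points on $E$. Since $\omega$ is meromorphic with a single zero of order $a$ and a single pole of order $a$, the divisor of $\omega$ is $a(P) - a(Q)$, and because principal divisors on an elliptic curve sum to zero in the group law, we get $a(P-Q) = 0$ in $E$, i.e. the point $P - Q$ is a torsion point whose order divides $a$. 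The plan is to show that the rotation number $d$ governs exactly the order of this torsion point: more precisely, $P-Q$ has exact order $a/d$. This should follow from translating the index computation $rot(X) = \gcd(a, ind(\beta), ind(\gamma))$ into the language of the Gauss map / winding numbers of curves on $E$ relative to $\omega$, comparing with how the marked point $P-Q$ distributes along a homology basis. So the map $\mathcal{C}_d^a \to Y_1(a/d)$ sends $(X,\omega) \mapsto (E, P-Q)$.

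Next I would check this map is well-defined on the \emph{projectivized} stratum (scaling $\omega$ by $\mathbb{C}^*$ does not change $E$, $P$, $Q$), is holomorphic (period coordinates on $\mathbb{P}\mathcal{H}(a,-a)$ map holomorphically to the $j$-line plus the torsion data, since the periods of $\omega$ recover $\Lambda$ up to scale and the relative period $\int_Q^P \omega$ recovers $P-Q$), and is a bijection onto $Y_1(a/d)$. For injectivity: the data $(E, P-Q)$ together with the constraint that $\omega$ has divisor $a(P)-a(Q)$ determines $\omega$ up to scalar (the space of meromorphic differentials with a prescribed principal divisor is one-dimensional), hence determines the point of $\mathbb{P}\mathcal{H}(a,-a)$; one must also pin down $P$ and $Q$ individually, but since only $P - Q$ matters up to translation on $E$, and translation is an automorphism of the pair $(E, \omega)$ in the flat picture, this is consistent. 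For surjectivity: given any $(E, t)$ with $t$ of exact order $a/d$, set $P$ arbitrary, $Q = P - t$, and take $\omega$ the unique-up-to-scalar differential with divisor $a(P) - a(Q)$ — it exists precisely because $a \cdot t = 0$ — and one checks its rotation number is $d$ using the order of $t$. Finally, a holomorphic bijection between Riemann surfaces is a biholomorphism, and it extends to the compactifications $\mathbb{P}\mathcal{H}(a,-a) \hookrightarrow$ (its Deligne–Mumford-type completion) versus $X_1(a/d)$ by matching the cusps with the boundary strata where the flat structure degenerates.

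The main obstacle I anticipate is the precise identification of the rotation number with the exact order of $t = P - Q$, rather than merely a divisor of it. The inequality ``order of $t$ divides $a/\mathrm{rot}(X)$'' should be the easy direction; the reverse — that a homology basis realizing $\gcd(a, ind(\beta), ind(\gamma)) = d$ forces $t$ to have order \emph{exactly} $a/d$ and not a proper divisor — requires a careful argument relating the winding number of a loop $\beta$ on the flat surface (which measures $ind(\beta)$) to the "fractional part" of how $\beta$ wraps around relative to the marked point $P-Q$ on $E$. Concretely, on $E = \mathbb{C}/\Lambda$ one can compute $ind(\beta)$ as an integral of $d\log$ of the relevant theta-like expression built from $\omega$, and the contribution of the zero and pole gives a term proportional to the position of $P-Q$ modulo $\Lambda$; extracting that this pins the order exactly is the delicate step. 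A secondary subtlety is verifying the map respects the component structure on the nose, i.e. that $\mathcal{C}_d^a$ (and not some other component) maps into $X_1(a/d)$ — but this is essentially the same computation done once, combined with Boissy's classification quoted in the excerpt, which tells us the components are indexed precisely by the divisors $d \mid a$, $d \neq a$, matching the values of $a/d \geq 2$ for which $X_1(a/d)$ is the relevant modular curve.
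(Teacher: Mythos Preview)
Your overall strategy coincides with the paper's: forget the differential to recover the underlying elliptic curve, use Abel's theorem to see that the difference $P-Q$ of the zero and pole is an $a$-torsion point, and then argue that the rotation number $d$ of the component matches the ``algebraic invariant'' $l$ determined by the exact torsion order $a/l$ of $P-Q$. The paper also observes, as you do, that the map is well-defined on the projectivization and that the differential is recovered up to scalar from the divisor.

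Where you diverge is precisely at the step you flag as delicate. You propose to establish $d=l$ by a direct analytic computation of loop indices in terms of theta-like expressions and the position of $P-Q$ in $\mathbb{C}/\Lambda$. The paper avoids this entirely with a short divisibility-and-counting trick: it first notes (via Boissy) that both the flat invariant $d$ and the algebraic invariant $l$ give bijections between connected components and divisors of $a$ other than $a$ itself. It then proves $l\mid d$ by exhibiting, in the component with algebraic invariant $l$, a differential of the form $f^{l}(z)\,dz$ with $f(z)\,dz$ in a stratum of lower degree; since $\arg(f^{l})=l\cdot\arg(f)$, every loop index is a multiple of $l$, hence $l$ divides the rotation number. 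Two complete invariants on the same finite index set, one dividing the other, must coincide. This bypasses the winding-number analysis you anticipate and is considerably shorter; your route would work but is more laborious. Note also that the paper does not discuss extension to the cusps at all---the statement is interpreted at the level of the open modular curve and the biholomorphism is asserted on that locus.
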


This theorem provides a conceptual reason for the nonexistence of a component whose rotation number is equal to $a$. Indeed, this component would have been isomorphic to $X_{1}(1)$ which is empty.\newline

It should also be noted that the isomorphism between connected components does not preserve the action of $GL^{+}(2,\mathbb{R})$ since the number of open orbits may vary.\newline

Theorem 2.2 is proved in section 3.\newline

We already know that the topology of connected components is given by modular curves. Connected components of different strata may be isomorphic if they correspond to the same modular curve. That is not all. Their walls-and-chambers structure will also be conjugated. This means that the topological pairs formed by the connected components and their discriminant (the union of the walls) are homeomorphic.

\begin{thm} Walls-and-chambers structures of connected components $\mathcal{C}_{d}^{a}$ and $\mathcal{C}_{k.d}^{k.a}$ are conjugated.
\end{thm}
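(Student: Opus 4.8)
The plan is to reduce, via Theorem 2.1, to a statement about a single curve and then to match two combinatorial decorations of it. By Theorem 2.1 both $\mathcal{C}_d^a$ and $\mathcal{C}_{kd}^{ka}$ are biholomorphic to $X_1(N)$ with $N=a/d=(ka)/(kd)$, so it suffices to produce a homeomorphism of $X_1(N)$ carrying the walls-and-chambers structure (in the sense of \cite{Ta}) inherited from the first model to the one inherited from the second. I would not expect this homeomorphism to be the biholomorphism of Theorem 2.1: the differential attached to a point $\tau$ changes from $g_\tau\,dz$ in the first family to $g_\tau^{\,k}\,dz$ in the second (where $g_\tau$ is the elliptic function on $\mathbb{C}/(\mathbb{Z}+\tau\mathbb{Z})$ with divisor $a\cdot(1/N)-a\cdot(0)$), and these have no affinely related period maps, so the walls need not lie over the same sublocus of $X_1(N)$. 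The identification to be produced is combinatorial, not holomorphic.

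The core of the plan is to run the classification of chambers of $\mathcal{C}_d^a$ in terms that visibly depend only on $N$. For a flat surface $(X,\omega)\in\mathcal{C}_d^a$ I would describe the configuration of saddle connections issuing from the unique cone point, of angle $2\pi(a+1)$, organised by their free homotopy classes in $X$ and by the cyclic order in which they leave the cone point; a wall of $\mathbb{P}\mathcal{H}(a,-a)$ is then the locus where this separatrix diagram undergoes an elementary move --- a saddle connection aligning with a neighbour, collapsing, or escaping into the pole. The key claim is that the set of homotopy classes that actually occur, the available elementary moves, and hence the incidence graph together with its embedding in the curve, are controlled by the single datum $N$ (the order of the torsion point $P_0-P_\infty$) and the Farey-type combinatorics of primitive classes in $H_1(X;\mathbb{Z})$, and not by $a$ itself. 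This is precisely the computation by which the paper obtains its formulas for the number of chambers; the point needed here is that its output is a function of $N$ alone.

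Granting such an $N$-only description, the walls-and-chambers structures of $\mathcal{C}_d^a$ and $\mathcal{C}_{kd}^{ka}$ are the same combinatorial datum placed on the same curve $X_1(N)$, and one builds the required homeomorphism cell by cell: first match the chambers of the two structures according to the canonical graph isomorphism, then extend the matching over the one-dimensional walls and the vertices. The resulting homeomorphism conjugates the two structures, and in particular the two incidence graphs, which is the assertion.

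I expect the key claim of the second paragraph to be the main obstacle. The flat surfaces underlying $\mathcal{C}_d^a$ and $\mathcal{C}_{kd}^{ka}$ are genuinely different --- different cone angle, different numbers of separatrices and saddle connections, different local model at the pole --- so there is no geometric map between them and the invariance of the combinatorics under $a\mapsto ka$ has to be verified directly. The delicate bookkeeping is to show that enlarging $a$ to $ka$ merely refines the cone point, the $a+1$ separatrices becoming $ka+1$, without changing which homotopy classes support saddle connections or which elementary moves are available, so that each wall of the first structure is replaced by a wall of the second through which possibly more saddle connections move simultaneously but along the same combinatorial locus. Carrying this out uniformly in $a$ and $k$ is where the explicit models of the flat structures of $\mathcal{H}(a,-a)$ developed in the paper are essential.
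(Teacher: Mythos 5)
Your overall strategy coincides with the paper's: reduce via Theorem 2.1, argue that the chamber combinatorics is a function of $N=a/d$ alone (equivalently, is homogeneous under $a\mapsto ka$, $d\mapsto kd$), and then upgrade the resulting bijection of chambers and isomorphism of incidence graphs to a homeomorphism cell by cell, using the fact that chambers are disks. However, the proof as written has a genuine gap: the ``key claim'' of your second paragraph --- that the set of chambers, the walls, and the incidence relations depend only on $N$ --- is exactly the content of the theorem, and you explicitly leave it unverified (``Granting such an $N$-only description\dots'', ``I expect the key claim \dots to be the main obstacle''). Appealing to ``the computation by which the paper obtains its formulas for the number of chambers'' is circular in a self-contained argument. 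Moreover, the language you propose for carrying it out (separatrix diagrams at the cone point, free homotopy classes supporting saddle connections, Farey combinatorics on $H_1$) makes the verification harder than necessary: tracking which of the $ka+1$ separatrices carry saddle connections as $a$ is rescaled is precisely the ``delicate bookkeeping'' you flag, and nothing in your outline indicates how to control it.

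The paper closes this gap differently and more economically. It first classifies the chambers by the topological type of the core (cylinder, triangle, degenerate; Proposition 4.1) and attaches to each chamber a discrete invariant --- an unordered partition, a cyclic tripartition, or an ordered tripartition of $a$ --- recording the integer multiples of $2\pi$ by which the angles at the pole exceed $\pi$; the rotation number of the ambient component is the gcd of this invariant with $a$ (Propositions 4.2--4.7). The invariants of chambers of $\mathcal{C}^{ad}_{d}$ are then exactly $d$ times those of chambers of $\mathcal{C}^{a}_{1}$, the defining conditions are homogeneous, and the adjacency rules of Proposition 4.8 (e.g.\ $(x,y,z)\mapsto(x+y,\,y,\,z-x)$, and $k+k'\mapsto k+k+(k'-k)$) are linear, hence also homogeneous of degree one. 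Multiplication by $d$ is therefore a type- and adjacency-preserving bijection of chambers, and since each chamber is a topological disk with walls that are arcs, this is already an isomorphism of CW-complexes --- no separate cell-by-cell extension argument or analysis of homotopy classes of saddle connections is needed. To repair your proof you would need to substitute this explicit angle-invariant classification (or an equivalent) for the unproven claim; without it the argument does not go through.
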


To a certain extent, we can say the walls-and-chambers structure exists directly on the modular curve $X_{1}\left( \dfrac{a}{d}\right)$. For any modular curve, we get a graph of chambers describing the adjacency relations between them. We get a family of graphs $\mathcal{G}_{a}$ where $a\geq2$, see Figure 1 for the first graphs of the family.\newline

\begin{figure}
\includegraphics[scale=0.3]{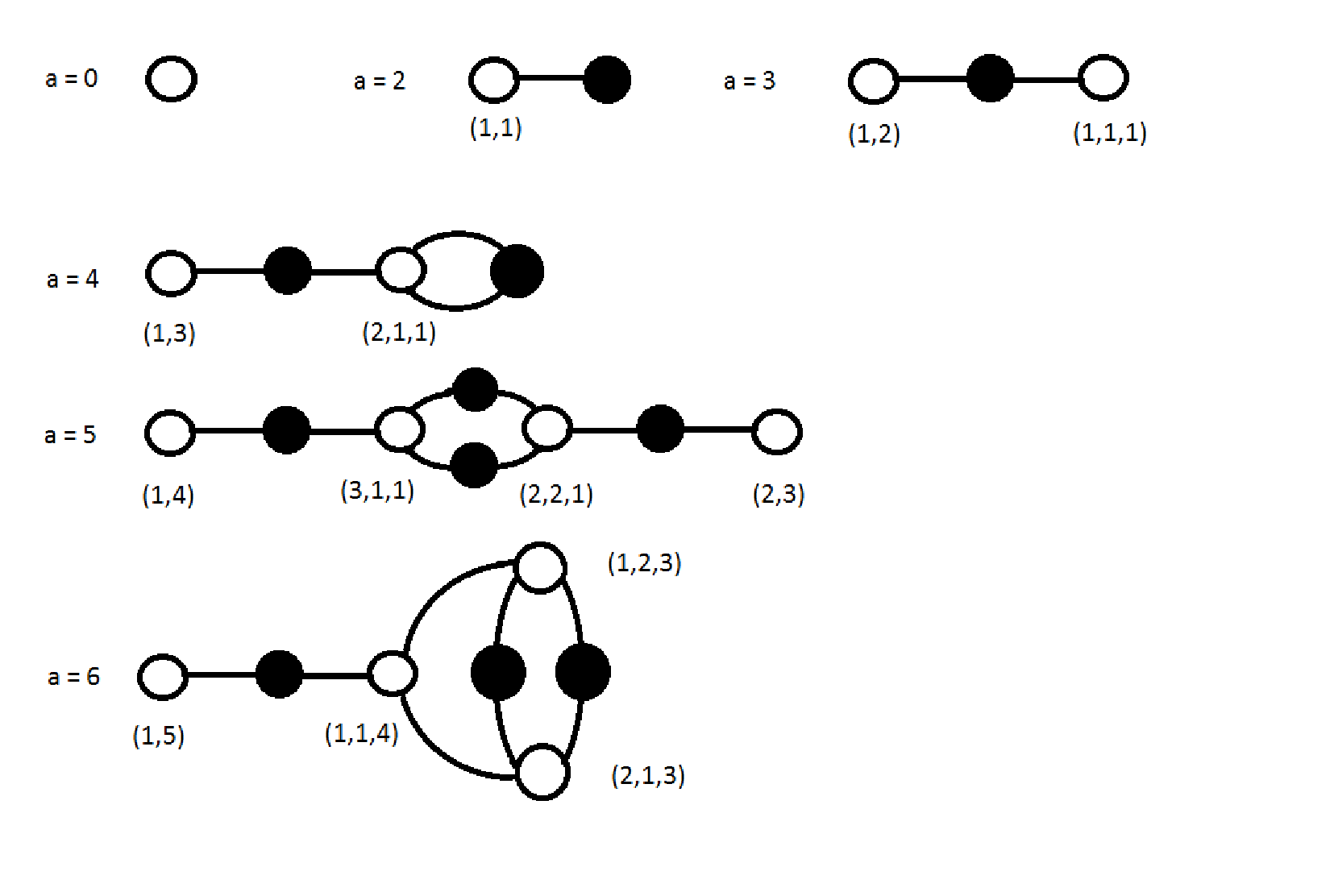}
\caption{The first elements of the family of graphs $\mathcal{G}_{a}$. Black and white vertices correspond to different types of chambers.}
\end{figure}

Theorem 2.3 is proved in section 4.\newline

Theorem 2.4 computes the genus and the number of punctures of modular curves $X_{1}(N)$. This is a well known result of number theory. What is new is that while usual proofs are algebro-geometric (see \cite{DS} for a complete study), our proof involves only flat geometry.\newline
In the following, $\phi$ is Euler's totient function.\newline

\begin{thm}
For any $N\geq5$, $X_{1}(N)$ is a complex curve of genus $g$ with $l$ cusps where we have $g=1+\dfrac{N^{2}}{24}\prod \limits_{p \vert N}(1-\dfrac{1}{p^{2}})-\dfrac{1}{4}\sum \limits_{d \vert N} \phi(d)\phi(N/d)$ and $l=\dfrac{1}{2}\sum \limits_{d \vert N} \phi(d)\phi(N/d)$.\newline
There are three exceptional cases: $X_{1}(4)$ is a complex curve of genus zero with three cusps, $X_{1}(3)$ is a complex curve of genus zero with two cusps and an orbifold point of order three whereas $X_{1}(2)$ is a complex curve of genus zero with two cusps and an orbifold point of order two.\newline
\end{thm}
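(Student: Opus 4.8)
The plan is to reduce the statement to an Euler--characteristic computation on the CW complex furnished by the walls-and-chambers structure, and then to identify the resulting combinatorial quantities with the divisor sums in the statement. By Theorem 2.1 it is enough to work with the smooth compactification $\overline{\mathcal{C}_{1}^{N}}$ of the principal connected component of $\mathbb{P}\mathcal{H}(N,-N)$: this is $X_{1}(N)$, and the cusps of $X_{1}(N)$ are exactly the punctures of $\mathcal{C}_{1}^{N}$. I would then invoke the explicit description of the walls-and-chambers structure obtained in Section 4: the list of chambers (with their two types, corresponding to the black and white vertices of $\mathcal{G}_{N}$), the list of walls between them, the combinatorial type of the wall complex near each of its vertices, and its local picture near each puncture. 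The walls, together with their mutual intersection points, form a graph $\Lambda_{N}$ embedded in $\overline{\mathcal{C}_{1}^{N}}$ whose complementary regions are the chambers.

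The first substantive step is to prove that every chamber is an open topological disk. This should follow from the fact that, within a fixed chamber, a flat surface of the stratum is determined by period coordinates ranging over an explicitly convex (hence contractible) open polytope, so that after projectivizing and normalizing the chamber is identified with a ball. Granting this, $\Lambda_{N}$, the chambers, and the punctures together yield a CW decomposition of $X_{1}(N)$, the punctures being adjoined as extra $0$-cells filling in neighbourhoods that the wall complex cuts out as disks. Writing $F$ for the number of chambers, $E$ for the number of wall segments, and $V$ for the number of vertices of $\Lambda_{N}$ together with the $l$ punctures, Euler's formula gives $V-E+F=2-2g$. From the combinatorial rules of Section 4, each of $F$, $E$ and $V-l$ is an explicit function of $N$; equivalently one checks that the ``interior'' Euler characteristic $V-E+F-l$ equals $-\tfrac{N^{2}}{12}\prod_{p\mid N}(1-p^{-2})$, which matches the expected Euler characteristic of $Y_{1}(N)=\mathbb{H}/\Gamma_{1}(N)$.

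Next I would identify the puncture count. I expect the punctures of $\mathcal{C}_{1}^{N}$ to be in bijection with the boundary degeneration data (the pairs attached to the cylinder and half-cylinder decompositions along the boundary of the stratum), and that this count is $l=\tfrac12\sum_{d\mid N}\phi(d)\phi(N/d)$; I would verify this first, since it is also the quantity $l$ in the statement. Substituting $V=(V-l)+l$ into $V-E+F=2-2g$ and solving for $g$ then leaves an identity purely in elementary number theory, which I would massage using $\sum_{d\mid N}\phi(d)=N$ and the interpretation of $\tfrac{N^{2}}{12}\prod_{p\mid N}(1-p^{-2})$ as (up to the factor $2$ coming from $-I$) one sixth of the index of $\Gamma_{1}(N)$ in $\mathrm{SL}_{2}(\mathbb{Z})$, to arrive at the stated closed forms for $g$ and $l$. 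Finally, the cases $N=2,3,4$ must be treated by hand: there the generic picture degenerates — several potential walls or chambers coincide and certain flat surfaces acquire nontrivial automorphisms — so the general counts fail (for instance $\sum_{d\mid N}\phi(d)\phi(N/d)$ is odd at $N=4$), and orbifold points appear. I would list the chambers and walls directly for each of these three values, observe that the fixed loci of the extra symmetries produce an orbifold point of order $2$ for $N=2$ and of order $3$ for $N=3$, and verify genus $0$ with the claimed number of cusps by inspection.

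The main obstacle is the bookkeeping in the general step: proving that every chamber is a disk, correctly enumerating the vertices of $\Lambda_{N}$ — in particular the local structure of the wall complex around the punctures, which governs how many $1$-cells abut each puncture and whether it contributes a genuine $0$-cell — and then recognising the three resulting divisor sums. The translation of the combinatorial cusp count into $\tfrac12\sum_{d\mid N}\phi(d)\phi(N/d)$ and of the chamber/wall count into the Jordan-totient term $\tfrac{N^{2}}{24}\prod_{p\mid N}(1-p^{-2})$ is where the real work lies.
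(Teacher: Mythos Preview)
Your overall strategy coincides with the paper's: compactify the principal component, use the walls-and-chambers decomposition as a CW structure, compute $V-E+F=2-2g$, and treat $N=2,3,4$ by hand (the paper does these in Propositions 4.9--4.11). The disk property of chambers and the counts $F_{N}$, $C_{N}$ are supplied by Propositions 4.2--4.7 and 5.1--5.2, so those parts of your outline are fine.

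The genuine gap is exactly the step you yourself flag as ``where the real work lies'': the cusp count. You write that you \emph{expect} the punctures to be in bijection with certain degeneration data and that this yields $\tfrac{1}{2}\sum_{d\mid N}\phi(d)\phi(N/d)$, but you give no mechanism. The paper's key idea is concrete and is not in your proposal: a small loop around a cusp lying on the discriminant traces out a finite orbit of the operator $U:(x,y,z)\mapsto(x+y,\,y,\,z-x)$ on the triangle-chamber labels $\mathcal{E}_{N}^{1}$, and along such an orbit the middle coordinate $y$ is fixed. For each divisor $d\mid N$ with $d\neq N$ there are $\phi(N/d)$ choices of $y$ with $\gcd(y,N)=d$; for each such $y$ the orbits have length $N/d$ and there are $\phi(d)$ of them; dividing by $2$ for the global inversion $S$ gives the discriminant-cusp count, to which one adds the $\tfrac{1}{2}\phi(N)$ interior cusps of cylinder chambers. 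Without this orbit analysis (or an equivalent), the identification of $l$ is an assertion rather than a computation. Two smaller points: in the paper's decomposition only the cusps \emph{on the discriminant} are $0$-cells---the $\tfrac{1}{2}\phi(N)$ cusps interior to cylinder chambers are filled in as ordinary points of $2$-cells---so taking ``$V=$ all $l$ punctures plus wall intersections'' as you do would make cylinder chambers into annuli and break the CW structure; and invoking the index $[\mathrm{SL}_{2}(\mathbb{Z}):\Gamma_{1}(N)]$ runs counter to the paper's stated goal of a purely flat-geometric proof---the paper instead recovers $\tfrac{N^{2}}{2}\prod_{p\mid N}(1-p^{-2})$ by M\"obius-inverting the explicit polynomial $S_{d}=\lfloor d/2\rfloor+\tfrac{(d-1)(d-2)}{2}$.
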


These formula simplify if we consider prime numbers.

\begin{cor}
For a given prime number $p$, we have $X_{1}(p) \simeq \mathbb{P}\mathcal{H}(p,-p)$. As a complex algebraic curve, this modular curve is of genus $\dfrac{(p-5)(p-7)}{24}$ with $p-1$ punctures.
\end{cor}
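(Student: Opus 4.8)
The plan is to derive both assertions as immediate specializations of Theorems 2.1 and 2.3, exploiting that a prime $p$ has only the two divisors $1$ and $p$.

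First I would prove the biholomorphism. By the classification recalled above, the connected components of $\mathcal{H}(p,-p)$ are the $\mathcal{C}_{d}^{p}$ indexed by the divisors $d$ of $p$ other than $p$ itself; for $p$ prime the only such $d$ is $1$, so $\mathcal{H}(p,-p)$, hence $\mathbb{P}\mathcal{H}(p,-p)$, is connected and equals $\mathcal{C}_{1}^{p}$. Theorem 2.1 then yields $\mathbb{P}\mathcal{H}(p,-p)=\mathcal{C}_{1}^{p}\simeq X_{1}(p/1)=X_{1}(p)$, which is an algebraic curve, and the biholomorphism transports the algebraic structure.

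Next I would evaluate the formulas of Theorem 2.3 at $N=p$, which is legitimate since $p\geq 5$: the primes $2$ and $3$ fall under the exceptional clause of Theorem 2.3, and there the displayed quadratic is not even integral, so the corollary is understood for $p\geq 5$. The divisors of $p$ being $1$ and $p$, one has $\sum_{d\mid p}\phi(d)\phi(p/d)=2\phi(1)\phi(p)=2(p-1)$ and $\prod_{q\mid p}(1-1/q^{2})=1-1/p^{2}$, whence
\[
l=\tfrac12\cdot 2(p-1)=p-1,\qquad g=1+\frac{p^{2}}{24}\Bigl(1-\frac{1}{p^{2}}\Bigr)-\frac{1}{4}\cdot 2(p-1).
\]
Putting the right-hand side of the genus formula over the common denominator $24$ turns it into $\dfrac{24+p^{2}-1-12(p-1)}{24}=\dfrac{p^{2}-12p+35}{24}=\dfrac{(p-5)(p-7)}{24}$, which is the asserted genus; the $p-1$ cusps are exactly the $p-1$ punctures of the affine projectivized stratum.

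I expect no real obstacle: the corollary is pure bookkeeping on top of the two theorems. The only points deserving a line of care are the identification of the "punctures" of $\mathbb{P}\mathcal{H}(p,-p)$ with the "cusps" of $X_{1}(p)$ — the projectivized stratum being the affine modular curve, whose smooth compactification adds precisely the cusps — together with the sanity checks that $g=0$ for $p=5,7$ (matching the rationality of $X_{1}(5)$ and $X_{1}(7)$) and that the cases $p=2,3$ are indeed the ones excluded by the exceptional clause.
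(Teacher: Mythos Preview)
Your proposal is correct and matches the paper's approach: the corollary is obtained exactly by specializing Theorems 2.1 and 2.3 to a prime $p$, using that the only divisor of $p$ other than $p$ itself is $1$, and the arithmetic simplifications you carry out are the intended ones. The paper does not spell out a separate proof beyond noting that the formulas ``are drastically simplified if we consider only prime numbers,'' so your bookkeeping is precisely what is needed.
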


Finally, we also confirm that the modular curves $X_{1}(N)$ (and the connected components of strata that are isomorphic to them) are topological spheres with punctures when $2 \leq N \leq 10$ and $N=12$.\newline

Theorem 2.4 and Corollary 2.5 are proved in section 5.\newline

The structure of the paper is the following:\newline
- In Section 3, we recall the background and tools useful to study translation surfaces with poles: flat metric, saddle connections, the moduli space, the core of a surface and its associated wall-and-chambers structure, the action of $GL^{+}(2,\mathbb{R})$, description of connected components in terms of divisors.\newline
- In Section 4, we prove general geometric results about chambers and combinatorial relations between them, drawing their graph of chambers. We also describe projectivized strata $\mathbb{P}\mathcal{H}(2,-2)$, $\mathbb{P}\mathcal{H}(3,-3)$ and $\mathbb{P}\mathcal{H}(4,-4)$.\newline
- In Section 5, we establish formula to count chambers, connected components of the discriminant and to compute the topological invariants of the connected components of strata.\newline 

\textit{Acknowledgements.} I thank Quentin Gendron, Carlos Matheus and Anton Zorich for many interesting discussions.\newline

\section{Definitions and tools}

\subsection{Flat structures}

Let $X$ be an elliptic curve and let $\phi$ be a meromorphic $1$-form with a unique zero of order $a$ and a unique pole of order $a$ with $a\geq2$. Outside these two singularities, integration of $\phi$ gives local coordinates whose transition maps are of the type $z \mapsto z+c$. The pair $(X,\phi)$ seen as a smooth surface with such a translation atlas is called a \textit{translation surface with poles}.\newline

In a neighborhood of the zero of the differential, the metric induced by $\phi$ admits a conical singularity of angle $\left(2a+2 \right)\pi$. The neighborhood of the pole of order $a$ is obtained by taking an infinite cone of angle $(2a-2)\pi$ and removing a compact neighborhood of the conical singularity.

\subsection{Moduli space}

If $(X,\phi)$ and $(X',\phi')$ are flat surfaces such that there is a biholomorphism $f$ from $X$ to $X'$ such that $\phi$ is the pullback of $\phi'$, then $f$ is an isometry for the flat metrics defined by $\phi$ and $\phi'$.\newline

As in the case of Abelian differentials, we define the moduli space of meromorphic differentials as the space of equivalence classes of flat surfaces with poles of higher order $(X,\phi)$ up to biholomorphism preserving the differential.\newline

We denote by $\mathcal{H}(a,-a)$ the \textit{stratum} of meromorphic differentials with a unique zero and a unique pole that have both the same degree $a$. When we consider differentials up to multiplication by a nonzero complex number, we get the projectivized stratum $\mathbb{P}\mathcal{H}(a,-a)$.

On each stratum, $GL^{+}(2,\mathbb{R})$ acts by composition with coordinate functions, see \cite{Zo}. Since neighborhoods of poles have infinite area, we cannot normalize the area of the surface and thus must consider the full action of $GL^{+}(2,\mathbb{R})$.

\subsection{Saddle connections}

\begin{defn} A \textit{saddle connection} is a geodesic segment joining two conical singularities of the flat surface such that all interior points are not conical singularities.
\end{defn}

Every saddle connection represents a class in the relative homology group $H_{1}(X\setminus \lbrace P \rbrace, \lbrace N \rbrace)$ where $P$ and $N$ are respectively the pole and the zero of the meromorphic $1$-form. Strata are complex-analytic orbifolds with local coordinates given by the period map of maximal system of saddle connections, see \cite{Zo}.

\subsection{Connected components}

In addition to a classification of connected components, elementary algebraic geometry allows a complete description of the topology of these components. Components of projectivized strata are modular curves.

\begin{proof}[Proof of Theorem 2.2]
We consider stratum $\mathcal{H}(a,-a)$ with $a\geq2$. An element of $\mathcal{H}(a,-a)$ is given, up to a constant multiple, by a pair $(X,D)$ where $X=\mathbb{C}/\Gamma$ is a complex torus and $D=-aP+aQ$ is a divisor on $X$. We have $\Gamma=\mathbb{Z}u+\mathbb{Z}v$. Without loss of generality, we assume $P=0$. As a consequence of Abel's theorem characterizing divisors coming from meromorphic differentials, we have $aQ \in \Gamma$, see Chapter 3 in \cite{DS}. Therefore, $Q$ is of the form $\dfrac{m}{a}u+\dfrac{n}{a}v$ where $m$ and $n$ are integers. Since we can deform continuously the lattice that defines the torus, different coordinates of $Q$ that are image of each other by basis change operations belong to the same connected component. In other words, we can deform $Q$ into a divisor whose coordinates are $\dfrac{m}{a}$ and $\dfrac{n}{a}v$ in a basis $(u,u+v)$ or $(u+v,v)$. Therefore, divisors of the form $\dfrac{m+kn}{a}u+\dfrac{n+k'm}{a}v$ with $k,k' \in \mathbb{Z}$ correspond to meromorphic differentials in the same connected component. Every basis change is generated by these two operations so there is exactly one connected component for each value $l$ of $gcd(m,n,a)$. Then, $k=\dfrac{a}{l}$ is the smaller integer such that $kQ \in \Gamma$. In other words, $Q$ is a point of $k$-torsion and $l$ may be any integer divisor of $a$ different from $a$ (because $Q \not\in \Gamma$). In the following, $l$ is the algebraic invariant of the component. There is exactly one connected component of $\mathcal{H}(a,-a)$ for each arithmetic divisor $d$ of $a$ different from $a$.\newline
Following \cite{DS}, modular curves $X_{1}(N)$ are the moduli spaces of elliptic curves with a point of $N$-torsion. In other words, $X_{1}(N)$ parametrizes isomorphism classes of pairs $(X,P)$ where $X$ is an elliptic curve and $P$ is a point of $X$ such that $N.P=0$ and $k.P \neq 0$ for any $1 \leq k \leq n-1$. Therefore, projectivized strata $\mathbb{P}\mathcal{H}(a,-a)$ are disjoint unions of modular curves. For any integer divisor $d$ of $a$ different from $a$, there is a unique connected component of meromorphic differentials where the pole is a point of $\dfrac{a}{d}$-torsion. Such component is biholomorphic to $X_{1}\left( \dfrac{a}{d}\right)$.\newline
There is a geometric interpetation of this number: it is the greatest common divisor of the topological degrees of loops in every flat surface of the connected component.  This number is the flat invariant of the connected component. A direct construction show that every integer divisor $d$ of $a$ different from $a$ is realized in the stratum (see \cite{Bo} for systematic constructions of connected components). In the following, we prove that the flat invariant is equal to the algebraic invariant.\newline
Since there is exactly one connected component for each divisor of $a$ different from $a$, we just have to prove that one invariant divides the other to prove that they are equal. We prove that the algebraic invariant divides the flat invariant.\newline
We consider the connected component $\mathcal{C}^{a}_{d}$ of $\mathcal{H}(a,-a)$ whose algebraic invariant is $d$. The pole is a point of $\dfrac{a}{d}$-torsion. Therefore, there exists a meromorphic differential in $\mathcal{C}^{a}_{d}$ that is of the form $f^{d}(z)dz$ where $f(z)dz$ is a meromorphic differential of $\mathcal{C}^{d'}_{1}$ where $d'=\dfrac{a}{d}$. We denote by $b$ the flat invariant of $\mathcal{C}^{d'}_{1}$. We consider a loop $\gamma$ passing only through the regular points of the torus. Since we have $arg(f^{d}(\gamma(t)))=d.arg(f(\gamma(t)))$, the flat invariant of $f^{d}(z)dz$ in $\mathcal{C}^{a}_{d}$ is $bd$. Thus, the algebraic invariant divides the flat invariant in every connected component. Consequently, these two invariants coincide.
\end{proof}

\subsection{Core of a translation surface with poles}

The core of a flat surface of infinite area was introduced in \cite{HKK} and systematically used in \cite{Ta}. It is the convex hull of the conical singularities of a flat surface. Since all saddle connections belong to it, the core encompasses most of the qualitative (see walls-and chambers structure in subsection 3.7) and quantitative (periods of the homology) information about the geometry of the flat surface.

\begin{defn} A subset $E$ of a flat surface is \textit{convex} if and only if every element of any geodesic segment between two points of $E$ belongs to $E$.\newline
The \textit{core} of a flat surface $(X,\phi)$ is the convex hull $core(X)$ of its conical singularities.\newline
$\mathcal{I}\mathcal{C}(X)$ is the interior of $core(X)$ in $X$ and $\partial\mathcal{C}(X) = core(X)\ \backslash\ \mathcal{I}\mathcal{C}(X)$ is its boundary.\newline
The \textit{core} is said to be degenerated when $\mathcal{I}\mathcal{C}(X)=\emptyset$ that is when $core(X)$ is just the graph $\partial\mathcal{C}(X)$.
\end{defn}

\begin{lem} Let $X$ be a flat surface of $\mathcal{H}(a,-a)$, then $X \setminus core(X)$ is a topological disk that contains the unique pole. We refer to this disk as the \textit{domain of the pole}.
\end{lem}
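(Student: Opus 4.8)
The plan is to analyze the complement $X \setminus core(X)$ directly using the flat geometry near the pole. First I would recall the local model described in subsection 3.1: a neighborhood of a pole of order $a$ is obtained from an infinite cone of angle $(2a-2)\pi$ by removing a compact neighborhood of its (virtual) apex. The key structural fact I want to exploit is that in such a polar domain every maximal geodesic ray eventually escapes to infinity without returning, so that the convex hull of the conical singularities (here, the single zero of order $a$) cannot reach into the region where the metric looks cone-like; in other words, any point sufficiently far out toward the pole is separated from the zero by a geodesic that does not cross it, hence lies outside the core.

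Next I would argue that $X \setminus core(X)$ is connected and contains the pole. Connectedness follows because $core(X)$ is convex and closed: its complement in a surface is a disjoint union of pieces, but each piece of infinite area must contain a pole (a bounded complementary region would have its boundary made of saddle connections and would be a flat surface with no singularities in its interior, forcing it to be empty or to contribute a conical point, contradiction), and there is only one pole in $\mathcal{H}(a,-a)$. So there is exactly one unbounded complementary component, it contains the unique pole, and any hypothetical bounded complementary component would be empty. This uses crucially that the genus-one surface has no other poles and only the single zero as a boundary-producing singularity.

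The main step — and the place I expect the real work — is showing this complementary region is a topological \emph{disk}, not merely connected. Here I would use the angle constraint: the boundary $\partial\mathcal{C}(X)$ seen from the polar side is a union of saddle connections bounding the domain of the pole, and the cone angle at the pole is $(2a-2)\pi$, while the zero has angle $(2a+2)\pi$. A Gauss–Bonnet / Euler-characteristic count on the polar region: if it were not a disk it would carry positive genus or extra boundary components, which would change the curvature budget. Concretely, the domain of the pole is an immersed infinite-cone-like region; its boundary must be a single cycle of saddle connections (any additional boundary cycle would bound off further area and hence another pole), and "infinite cone minus a convex compact set" is topologically a half-open annulus, i.e. homeomorphic to a punctured disk (the puncture being the pole at infinity), which as a subset of $X$ including the pole is a disk. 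I would make this precise by developing the polar region into its flat cone model and checking that the retraction onto the pole carries no topological obstruction.

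The hardest point to nail down rigorously is ruling out that the core's complement could be an annulus wrapping nontrivially in the genus-one surface — a priori the pole domain might be homotopically nontrivial. The resolution is that the core itself must then be a nontrivial subsurface carrying all of $H_1(X)$; since $\dim H_1(X \setminus \{P\}, \{N\})$ is controlled by the dimension-two nature of the stratum, one shows the core, being the convex hull of a single point's worth of singularity data in genus one, deformation-retracts onto a wedge of circles realizing the full homology, and its regular neighborhood's complement is forced to be simply connected. I would formalize this via an Euler characteristic identity: $\chi(core(X)) + \chi(X \setminus core(X)) - \chi(\partial\mathcal{C}(X)) = \chi(X) = 0$, combined with $\chi(\text{pole domain}) = 1$ for a disk, and check consistency is only possible in the disk case. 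This last bookkeeping is routine once the connectedness and the "no bounded components" claims above are established, so the genuine obstacle is the careful treatment of the flat cone model near the pole and the exclusion of a nontrivial wrapping annulus.
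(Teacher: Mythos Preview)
Your approach is correct in spirit but dramatically over-engineered compared with the paper's argument, which is a one-line citation: by Proposition~2.3 of \cite{HKK}, $core(X)$ is a deformation retract of $X\setminus\{P\}$. From this single fact the lemma is immediate --- the retraction pushes every point of $X\setminus\{P\}$ outside the core radially toward $core(X)$, so $X\setminus core(X)$ is a punctured-disk neighborhood of $P$, hence a disk once $P$ is reinstated; connectedness, simple connectivity, and the exclusion of an annular complement all come for free.

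The irony is that the ``hardest point'' you isolate --- ruling out an annulus by showing the core carries all of $H_1$ and ``deformation-retracts onto a wedge of circles realizing the full homology'' --- is \emph{exactly} the content of the HKK proposition, which you assert without proof or citation. So your argument, after several paragraphs of flat-geometric preliminaries and an Euler-characteristic bookkeeping scheme, ultimately rests on the same black box the paper invokes in one sentence. Your Gauss--Bonnet and ``no bounded complementary component'' considerations are not wrong, but they are doing work that the deformation-retraction statement already subsumes. If you want a genuinely self-contained alternative, you would need to actually prove the retraction (or an equivalent homotopical statement about the core) rather than assume it; otherwise you should simply cite \cite{HKK} and be done.
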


\begin{proof}
Following Proposition 2.3 in \cite{HKK}, $core(X)$ is a deformation retract of $X \backslash \lbrace P \rbrace$ where $P$ is the unique pole.
\end{proof}

\begin{lem} For any translation surface with poles $X$, $\partial\mathcal{C}(X)$ is a finite union of saddle connections.
\end{lem}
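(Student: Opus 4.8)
The plan is to establish three things: that $core(X)$ is compact; that at each regular point of $\partial\mathcal{C}(X)$ the boundary is locally a single straight geodesic segment whose maximal extension is a saddle connection; and that a covering argument then forces the number of such saddle connections to be finite.

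\emph{Compactness.} The flat metric induced by $\phi$ makes $X$ minus its poles a complete, locally compact length space: any path approaching a pole has infinite length, so no Cauchy sequence escapes to a pole. The set $core(X)$ is a closed subset of this space, and by \cite{HKK} (where $core(X)$ is shown to be a deformation retract of the surface punctured at the poles) it is bounded. By Hopf--Rinow it is therefore compact; in particular so is $\partial\mathcal{C}(X)$.

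\emph{Local structure of the boundary.} Let $p\in\partial\mathcal{C}(X)$ be a regular point and pick an isometric chart $U\ni p$ small enough to avoid the conical singularities and small enough that geodesic segments of $X$ between points of $U$ are unique and map to straight segments of the chart. Then $core(X)\cap U$ is convex in the chart: the straight segment between two of its points is the geodesic between them, hence lies in $core(X)$ by convexity, and lies in $U$ after shrinking. The decisive point --- and the step I expect to be the main obstacle --- is that this local piece is not merely convex but is locally a finite intersection of half-planes, because $core(X)$ is the convex hull of the \emph{finite} set $Z$ of conical singularities: exactly as for the convex hull of finitely many points in the plane, a boundary point that is not an extreme point lies in the relative interior of an edge, and a regular point of $\partial\mathcal{C}(X)$ cannot be extreme (it would have to coincide with a point of $Z$, hence be singular). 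Making this rigorous in the flat setting requires controlling how geodesics wrap around the cone points, which is the content of the corresponding statement in \cite{HKK}. It follows that a neighbourhood of $p$ in $\partial\mathcal{C}(X)$ is an open geodesic arc $\alpha_p$, and that the boundary does not bend at a regular point; extending $\alpha_p$ maximally along $\partial\mathcal{C}(X)$ thus yields a geodesic segment $s_p$ with no conical singularity in its interior, and since $core(X)$ is bounded this extension cannot run off to infinity, so both endpoints of $s_p$ lie in $Z$ and $s_p$ is a saddle connection contained in $\partial\mathcal{C}(X)$.

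\emph{Finiteness.} The arcs $\alpha_p$ are open in $\partial\mathcal{C}(X)$ and cover $\partial\mathcal{C}(X)\setminus Z$. By compactness finitely many of them, say $\alpha_{p_1},\dots,\alpha_{p_r}$, already cover $\partial\mathcal{C}(X)\setminus Z$, so the closed set $s_{p_1}\cup\dots\cup s_{p_r}$ contains $\partial\mathcal{C}(X)\setminus Z$; since every point of $Z\cap\partial\mathcal{C}(X)$ is an endpoint of some $\alpha_p$, hence a limit of points of $\partial\mathcal{C}(X)\setminus Z$, this closed set is all of $\partial\mathcal{C}(X)$. Therefore $\partial\mathcal{C}(X)=s_{p_1}\cup\dots\cup s_{p_r}$ is a finite union of saddle connections. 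The degenerate case $\mathcal{I}\mathcal{C}(X)=\varnothing$, where $\partial\mathcal{C}(X)=core(X)$ is already a graph of geodesic segments, is handled in exactly the same way.
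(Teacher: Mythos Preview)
The paper does not prove this lemma at all: its entire proof reads ``See Proposition 2.2 in \cite{HKK}.'' So there is nothing to compare your argument against except the cited external source. Your sketch is a reasonable outline of how such a proof goes, and you correctly isolate the one genuinely nontrivial step --- that the boundary cannot be curved at a regular point, i.e.\ that the convex hull of the finite set $Z$ is locally polygonal --- and you defer exactly that step to \cite{HKK}. In that sense you end up leaning on the same reference the paper does, just with more scaffolding around it.

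Two small points are worth tightening. First, in the finiteness step you write ``by compactness finitely many of them already cover $\partial\mathcal{C}(X)\setminus Z$'', but $\partial\mathcal{C}(X)\setminus Z$ is not compact: you have removed finitely many points from a compact set. The fix is immediate --- adjoin to $\{\alpha_p\}$ a small open neighbourhood in $\partial\mathcal{C}(X)$ of each point of $Z$ to obtain an open cover of the compact set $\partial\mathcal{C}(X)$, and then extract a finite subcover --- or, more directly, observe that at each conical singularity the total angle is finite, so only finitely many boundary arcs can emanate from it, whence the number of saddle connections is at most half the sum of these local counts. Second, your compactness argument via Hopf--Rinow is heavier than necessary: the complement $X\setminus core(X)$ is a union of open domains of poles, so $core(X)$ is closed in the \emph{compact} Riemann surface $X$ and therefore compact.
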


\begin{proof} See Proposition 2.2 in \cite{HKK}.
\end{proof}

An ideal triangulation in a topological surface is a maximal collection of pairwise disjoint non-homotopic topological arcs. Connected components of such a decomposition are called \textit{ideal triangles}. They are topological disks bounded by three arcs. Vertices of an ideal triangle may coincide.\newline
There is a formula that relates the maximal number of noncrossing saddle connections of a translation surface with poles with the number of ideal triangles in any ideal triangulation of the core of the surface.

\begin{lem}
Let $|A|$ be the maximal number of noncrossing saddle connections of a flat surface that belongs to $\mathcal{H}(a,-a)$, then we have $|A| = 2+t$ where $t$ is the number of ideal triangles in any ideal triangulation of $core(X)$. Furthermore, $|A| \leq 4$.
\end{lem}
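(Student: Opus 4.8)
The plan is to identify a maximal family of noncrossing saddle connections with the edge set of an ideal triangulation of $core(X)$, to count its edges by an Euler characteristic argument specific to genus one with a single zero, and then to bound the number of triangles using the geometry of the domain of the pole.

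\emph{The formula $|A|=2+t$.} Every saddle connection of $X$ lies in $core(X)$, so a maximal noncrossing family $A$ decomposes $core(X)$ into regions admitting no interior geodesic diagonal; by Lemma~4.10 of \cite{Ta} each such region is an ideal triangle, so $A$ is the set of edges of an ideal triangulation of $core(X)$, say with $t$ triangles. Since $\mathcal{H}(a,-a)$ has a unique zero $Z$, this triangulation has a single vertex. Moreover $X\setminus core(X)$ is a topological disk, so $core(X)$ has exactly one boundary component; being a deformation retract of the genus-one surface $X\setminus\{P\}$ by Proposition~2.3 of \cite{HKK}, it has Euler characteristic $-1$. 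Hence $1-|A|+t=-1$, that is $|A|=t+2$. The same equality holds with $t=0$ when $core(X)$ is degenerate, since it is then a wedge of two circles carrying exactly two saddle connections. In particular $t$ does not depend on the chosen ideal triangulation, as the statement presupposes.

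\emph{The bound $|A|\le4$.} Let $r$ be the number of saddle connections forming $\partial\mathcal{C}(X)$. Counting incidences between the $t$ triangles and the edges of $A$ (interior edges twice, boundary edges once) gives $3t=2|A|-r$, hence $t+r=4$ after substituting $|A|=t+2$. It therefore suffices to exclude $r=1$ (the case $r=0$ being impossible as $\partial\mathcal{C}(X)$ is nonempty). If $r=1$, the domain of the pole is a disk bounded by a single saddle connection loop $c$, hence isometric to the complement, inside the half-infinite flat cone of angle $(2a-2)\pi$ modelling the pole, of a compact disk $K$ with $\partial K=c$ containing the apex. Thus $K$ is a monogon whose only corner is at $Z$ and which carries one cone point of angle $(2a-2)\pi$, and Gauss--Bonnet on $K$ forces that corner's angle to equal $(3-2a)\pi$, which is negative for $a\ge2$: a contradiction. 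Hence $r\ge2$ and $t=4-r\le2$, so $|A|=t+2\le4$.

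I expect the last step to be the delicate point. One must control the combinatorics of $\partial\mathcal{C}(X)$ when it is not an embedded polygon (all of its $r$ edges return to $Z$, so it self-touches there as soon as $r\ge2$), and make precise how Gauss--Bonnet distributes the cone angle $(2a+2)\pi$ at $Z$ between $core(X)$ and the domain of the pole; in particular the naive guess that convexity of $core(X)$ bounds its angle at $Z$ fails, because geodesics through $Z$ keep the core convex even when that angle exceeds $\pi$. The systematic form of this analysis, valid for arbitrary strata, is exactly what Lemma~4.10 of \cite{Ta} provides, so the bound may equally well be quoted from there.
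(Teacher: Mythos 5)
Your Euler-characteristic derivation of $|A|=2+t$ is sound (and is essentially the content of Lemma~4.10 of \cite{Ta}, which is all the paper itself offers here: the lemma is quoted, not reproved). The incidence count giving $t+r=4$ is also fine for a nondegenerate core, with the caveat that it presupposes every boundary saddle connection borders exactly one triangle; when an edge has the domain of the pole on both sides (the degenerate core, where $t=0$ and $r=2$) the relation fails, though the bound $|A|\le 4$ is then trivial.

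The genuine gap is your exclusion of $r=1$. You assume that the domain of the pole is isometric to the complement, in the infinite cone of angle $(2a-2)\pi$, of a compact disk containing the apex. That is false, and in fact impossible: applying Gauss--Bonnet directly to the closure $D$ of the domain of the pole (the pole contributes curvature $2\pi-(2-2a)\pi=2a\pi$, exactly balancing the $-2a\pi$ at the zero on the torus) gives $(\pi-\beta)+2a\pi=2\pi$, i.e.\ a corner angle $\beta=(2a-1)\pi$ at $Z$. This exceeds both $2\pi$ and the total cone angle $(2a-2)\pi$, so $D$ cannot embed in that cone at all; only a \emph{neighborhood of the pole} is modelled on the cone near infinity. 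Your negative angle $(3-2a)\pi$ therefore refutes your unstated embedding hypothesis, not the case $r=1$. Note that $r=1$ is perfectly consistent with Gauss--Bonnet and with the combinatorics ($t=3$, $|A|=5$, total angle $3\pi+(2a-1)\pi=(2a+2)\pi$ at $Z$), so no angle count can kill it. The correct obstruction is a period argument: if $\partial\mathcal{C}(X)$ were a single saddle connection $c$, then $c$, traversed once, would bound the domain of the pole, whence $\int_{c}\phi=\pm 2\pi i\,\mathrm{Res}_{P}\phi=0$ because the unique pole of a meromorphic form on a compact surface has vanishing residue; but a geodesic segment of positive length has nonzero period $\int_{c}\phi$. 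This gives $r\ge 2$, hence $t\le 2$ and $|A|\le 4$. With that replacement (or by simply quoting \cite{Ta} for the bound, as you suggest and as the paper does), your argument is complete.
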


\begin{proof} See Lemma 4.10 in \cite{Ta}.
\end{proof}

\subsection{Discriminant and walls-and-chambers structure}

Strata of translation surfaces with poles decompose into chambers where the qualitative shape of the core is the same. This means that the topological pair $(X,\partial\mathcal{C}(X))$ is the same up to homeomorphism. The \textit{discriminant} is the locus that separates these chambers from each other.

\begin{defn} A translation surface with poles $X$ belongs to the \textit{discriminant} of the stratum if and only if there two consecutive saddle connections in the boundary of the domain of the pole (which is a topological disk, see Lemma 3.4) share an angle of $\pi$. \textit{Chambers} are the connected components of the complementary to the discriminant in the strata.
\end{defn}

The following lemma is proved as Proposition 4.12 in \cite{Ta}.

\begin{lem} The discriminant is a $GL^{+}(2,\mathbb{R})$-invariant hypersurface of real codimension one in the stratum.
\end{lem}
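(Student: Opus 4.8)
The plan is to treat the two assertions separately: first the $GL^{+}(2,\mathbb{R})$-invariance, which is essentially formal, then the fact that the discriminant is locally the zero set of a real-analytic submersion, which gives codimension one.

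For the invariance, recall that $g\in GL^{+}(2,\mathbb{R})$ acts on a flat surface by post-composing the translation charts with the $\mathbb{R}$-linear automorphism of $\C\cong\mathbb{R}^{2}$ that $g$ defines. Such a map sends geodesic segments to geodesic segments and fixes the (finite) set of conical singularities as a set, so it carries $core(X)$ to $core(gX)$ and $\partial\mathcal{C}(X)$ to $\partial\mathcal{C}(gX)$, preserving the cyclic order of the boundary saddle connections; since $g$ is orientation preserving it also sends the domain of the pole to the domain of the pole. The condition defining the discriminant — two consecutive boundary saddle connections meeting at an angle of $\pi$ on the side of the pole domain — is equivalent, after orienting the two saddle connections consistently along $\partial\mathcal{C}(X)$, to their holonomy vectors being positively proportional, and positive proportionality of vectors is preserved by any $g\in GL^{+}(2,\mathbb{R})$. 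Hence $X$ lies in the discriminant if and only if $gX$ does.

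For the codimension statement, fix $X_{0}$ in the discriminant, and let $s_{1},s_{2}$ be consecutive saddle connections of $\partial\mathcal{C}(X_{0})$ meeting at a vertex $Z$ with angle $\pi$ on the pole side. Complete $\{s_{1},s_{2}\}$ to a maximal system of pairwise non-crossing saddle connections; since local coordinates on the stratum are given by the period map of such a system and the stratum has complex dimension two, we may arrange that the holonomies $v_{1}=hol(s_{1})$ and $v_{2}=hol(s_{2})$ (oriented along $\partial\mathcal{C}$) are coordinate functions, or at least are $\mathbb{R}$-linearly independent as functionals on relative homology. Writing $v_{j}=x_{j}+iy_{j}$, the angle-$\pi$ condition reads $F(X):=x_{1}y_{2}-y_{1}x_{2}=\Ima\bigl(\overline{v_{1}(X)}\,v_{2}(X)\bigr)=0$ together with the open condition $\Rea\bigl(\overline{v_{1}(X)}\,v_{2}(X)\bigr)>0$. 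Since $s_{1},s_{2}$ are saddle connections, $v_{1}(X_{0}),v_{2}(X_{0})\neq 0$, so $dF(X_{0})\neq 0$ (its coefficients are $\pm x_{j},\pm y_{j}$, not all zero); hence $\{F=0\}$ is a smooth real hypersurface near $X_{0}$, and the discriminant coincides near $X_{0}$ with the part of it on which the persistent inequality $\Rea(\overline{v_{1}}v_{2})>0$ holds.

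To conclude, I would note that in a neighborhood of any surface only finitely many pairs of adjacent boundary saddle connections can undergo this degeneration — the number of boundary saddle connections being uniformly bounded, cf. the bound $|A|\le 4$ above, and their combinatorial configuration being locally essentially constant — so the discriminant is locally a finite union of smooth codimension-one pieces $\{F_{j}=0\}$, hence a real hypersurface of codimension one, smooth away from a set of real codimension at least two. The delicate point, which requires genuine care rather than being formal, is the bookkeeping of the combinatorial transitions of the core across a wall: when an interior angle reaches $\pi$ a boundary vertex may appear or disappear and a boundary saddle connection may split or merge, so one must check that the functions $v_{i}$ extend real-analytically across the transition and that $\{F=0\}$ is exactly the wall separating the two chambers on which $core(X)$ has distinct qualitative shape, rather than an artifact of the chosen triangulation. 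Matching these local models on the two sides of the wall is the main obstacle; granting it, the statement follows.
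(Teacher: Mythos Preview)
The paper does not actually prove this lemma; it simply cites Proposition~4.12 of \cite{Ta}. Your proposal therefore supplies an argument where the paper supplies none, and it is essentially sound. The $GL^{+}(2,\mathbb{R})$-invariance step is correct and complete. For the codimension step, your local model $F(X)=\Ima(\overline{v_{1}}v_{2})$ is the right one, and the computation $dF\neq 0$ goes through because $v_{1},v_{2}\neq 0$; the only point you assert without justification is that $s_{1},s_{2}$ give independent relative periods. In the present stratum this follows from the paper's own observation (proof of Proposition~4.8) that on the discriminant the core is degenerate and the two boundary saddle connections generate the homology of the punctured torus, so $(v_{1},v_{2})$ really are full period coordinates. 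In a general stratum one only needs that $[s_{1}],[s_{2}]$ are not proportional in relative homology, which still holds since both are boundary arcs of the pole domain meeting at a single vertex; then $F$ depends nontrivially on at least two real coordinates and your submersion argument still applies.

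Your final paragraph correctly identifies the genuine subtlety: the saddle connections $s_{1},s_{2}$ persist as period coordinates across the wall, but their status as \emph{boundary} saddle connections of the core does not, so one must check that $\{F=0,\ \Rea(\overline{v_{1}}v_{2})>0\}$ is exactly the locus where the topological type of $core(X)$ changes, not a spurious hypersurface interior to a chamber. You flag this honestly rather than sweep it under the rug, which is appropriate; the verification is routine once one knows the chamber classification (Proposition~4.1 here), and is presumably what the cited reference carries out in detail.
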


The topological structure on a translation surface with poles $(X,\phi)$ defined by the embedded graph $\partial\mathcal{C}(X)$ is invariant along the chambers. In particular the number of saddle connections of its boundary depend only on the chamber (see Proposition 4.13 in \cite{Ta} for details).\newline

\section{Chambers}

In strata $\mathcal{H}(a,-a)$, types of topological shapes of the core are very restricted.

\begin{prop}
Every chamber of $\mathbb{P}\mathcal{H}(a,-a)$ belongs to one of the following three types:\newline
(i) \textbf{Chambers of "cylinder" type} where $core(X)$ is a cylinder bounded by a pair of saddle connections (see Figure 2).\newline
(ii) \textbf{Chambers of "degenerate" type} where $core(X)$ is a pair of saddle connections (see Figure 3).\newline
(iii) \textbf{Chambers of "triangle" type} where $core(X)$ is an ideal triangle (see Figure 4).
\end{prop}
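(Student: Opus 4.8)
The plan is to bound the combinatorial complexity of $core(X)$ using Lemma 3.9, and then to enumerate, for each admissible value of that complexity, the shapes a convex flat subsurface can take when $X$ lies in a chamber. Since the discriminant is $GL^{+}(2,\mathbb{R})$-invariant (Lemma 3.8), in particular $\mathbb{C}^{*}$-invariant, and scaling does not change the qualitative shape of the core, it suffices to argue in $\mathcal{H}(a,-a)$.

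First I would record the local picture. In $\mathcal{H}(a,-a)$ the only conical point of the flat metric in the finite part of $X$ is the zero $Z$, of angle $(2a+2)\pi$; by Lemma 3.6 the pole lies in the disk $X\setminus core(X)$. Hence every saddle connection is a geodesic loop at $Z$, the graph $\partial\mathcal{C}(X)$ of Lemma 3.7 has $Z$ as its unique vertex, and $core(X)$, being a deformation retract of the once-punctured torus $X\setminus\{P\}$, has the homotopy type of a wedge of two circles, so $\chi(core(X))=-1$. Now let $t$ be the number of triangles of an ideal triangulation of $core(X)$; this number is constant on each chamber, and by Lemma 3.9 the maximal number of noncrossing saddle connections is $2+t$ and is at most $4$, so $t\in\{0,1,2\}$. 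When $t\ge 1$, such a triangulation has the single vertex $Z$ and $t$ faces, hence $1-E+t=\chi(core(X))=-1$, i.e. it has exactly $E=t+2$ edges, and these form a maximal noncrossing family of saddle connections.

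Then I would treat the three cases. If $t=0$, the core has empty interior, so $core(X)=\partial\mathcal{C}(X)$; this embedded (hence noncrossing) graph contains every saddle connection, so by Lemma 3.9 it consists of exactly two saddle connections through $Z$, which is type (ii). If $t=1$, the triangulation has one triangle whose three edge-slots are filled by three distinct saddle connections; no slot is repeated, so each of these bounds the triangle on one side and the pole domain on the other, whence $core(X)$ is precisely that flat triangle, with all three vertices equal to $Z$ — an ideal triangle, type (iii). If $t=2$, counting incidences between the six triangle-slots and the four edges gives two interior edges $e_{1},e_{2}$ and two boundary saddle connections $s_{1},s_{2}$. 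Because the gluings of a translation surface are by pure translations and no two sides of a triangle are parallel, no edge of a single triangle can be glued to another edge of the same triangle; hence $e_{1}$ and $e_{2}$ are each shared by the two triangles $T_{1},T_{2}$. Cutting the core open along $e_{2}$ yields a quadrilateral $T_{1}\cup_{e_{2}}T_{2}$ two of whose sides are copies of $e_{1}$ and the other two $s_{1},s_{2}$, and $core(X)$ is recovered by translating one copy of $e_{1}$ onto the other. If the two copies were adjacent sides, then being parallel they would meet at an interior angle $0$ or $\pi$; the first collapses the quadrilateral and the second forces two consecutive boundary saddle connections to make an angle $\pi$, i.e. $X$ would lie on the discriminant — both excluded since $X$ lies in a chamber. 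So the two copies of $e_{1}$ are opposite sides, and the identification turns the quadrilateral into a flat cylinder whose two boundary circles are $s_{1}$ and $s_{2}$, which is type (i). This exhausts $t\in\{0,1,2\}$.

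I expect the delicate step to be the $t=2$ case: showing that among the finitely many ways two ideal triangles sharing their two interior edges can be assembled into a genus-one surface with one boundary circle carrying a single cone point of angle $(2a+2)\pi$, only the cylinder survives once one discards the configurations that collapse the quadrilateral, require a spurious cone point of angle $\le\pi$ (impossible in this stratum, where the only conical point has angle $\ge 6\pi$), or place $X$ on the discriminant. The $t=0$ and $t=1$ cases, by contrast, are forced directly by the Euler-characteristic and edge counts together with Lemma 3.9.
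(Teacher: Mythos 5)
Your argument is correct and follows essentially the same route as the paper's proof: both rest on the lemma that the maximal number of noncrossing saddle connections equals $2+t$ with $t\in\{0,1,2\}$ ($t$ the number of ideal triangles, constant on a chamber) and then run the case analysis $t=0,1,2$; your Euler-characteristic bookkeeping and your more careful treatment of the $t=2$ gluing (ruling out self-gluings of a triangle and the adjacent-parallel-sides configuration) only make explicit what the paper's one-line justification leaves implicit. The only discrepancy is bibliographic: in the paper's numbering the $2+t$ bound is Lemma 3.7, the statement that the pole lies in the disk $X\setminus core(X)$ is Lemma 3.5, the finiteness of $\partial\mathcal{C}(X)$ is Lemma 3.6, and the $GL^{+}(2,\mathbb{R})$-invariance of the discriminant is Lemma 3.9, so your citations should be shifted accordingly.
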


\begin{figure}
\includegraphics[scale=0.3]{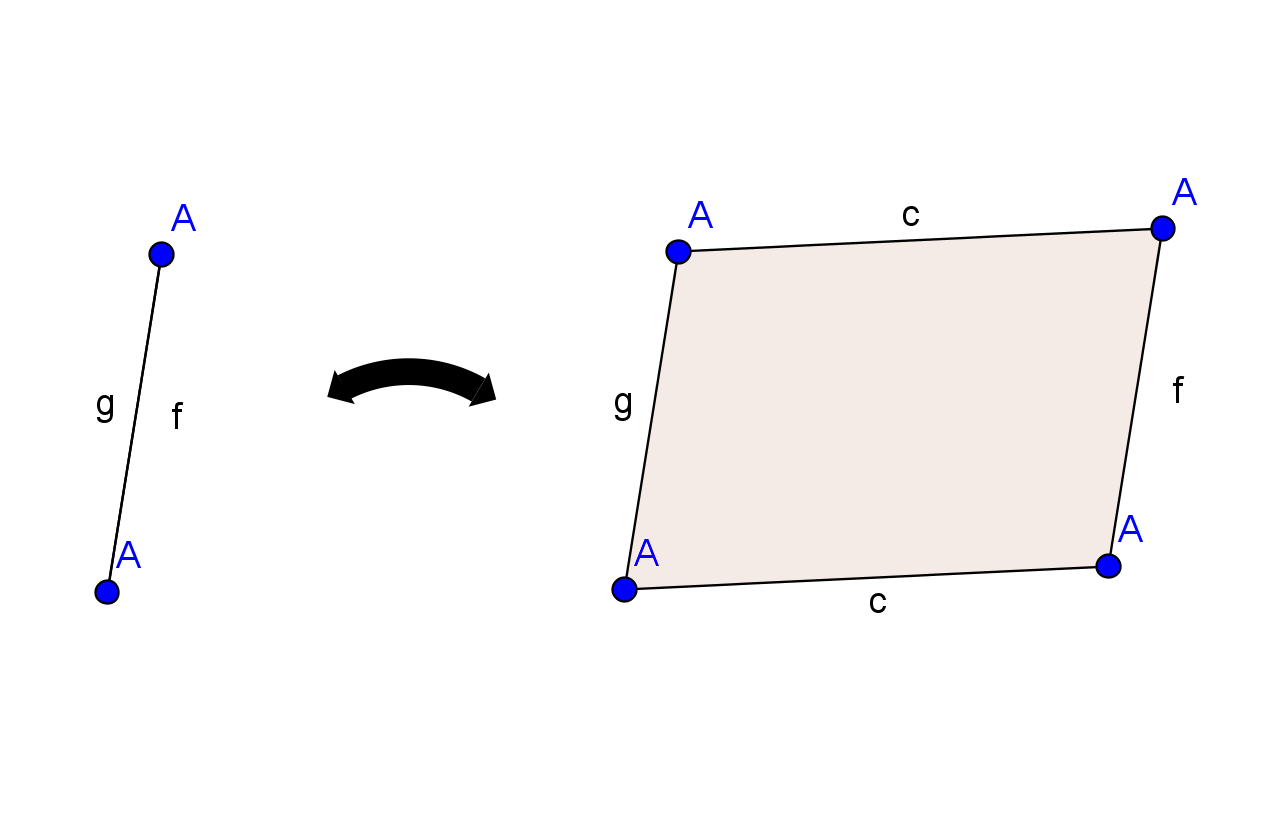}
\caption{A surface of the chamber of "cylinder" type in $\mathcal{H}(2,-2)$.}
\end{figure}

\begin{figure}
\includegraphics[scale=0.3]{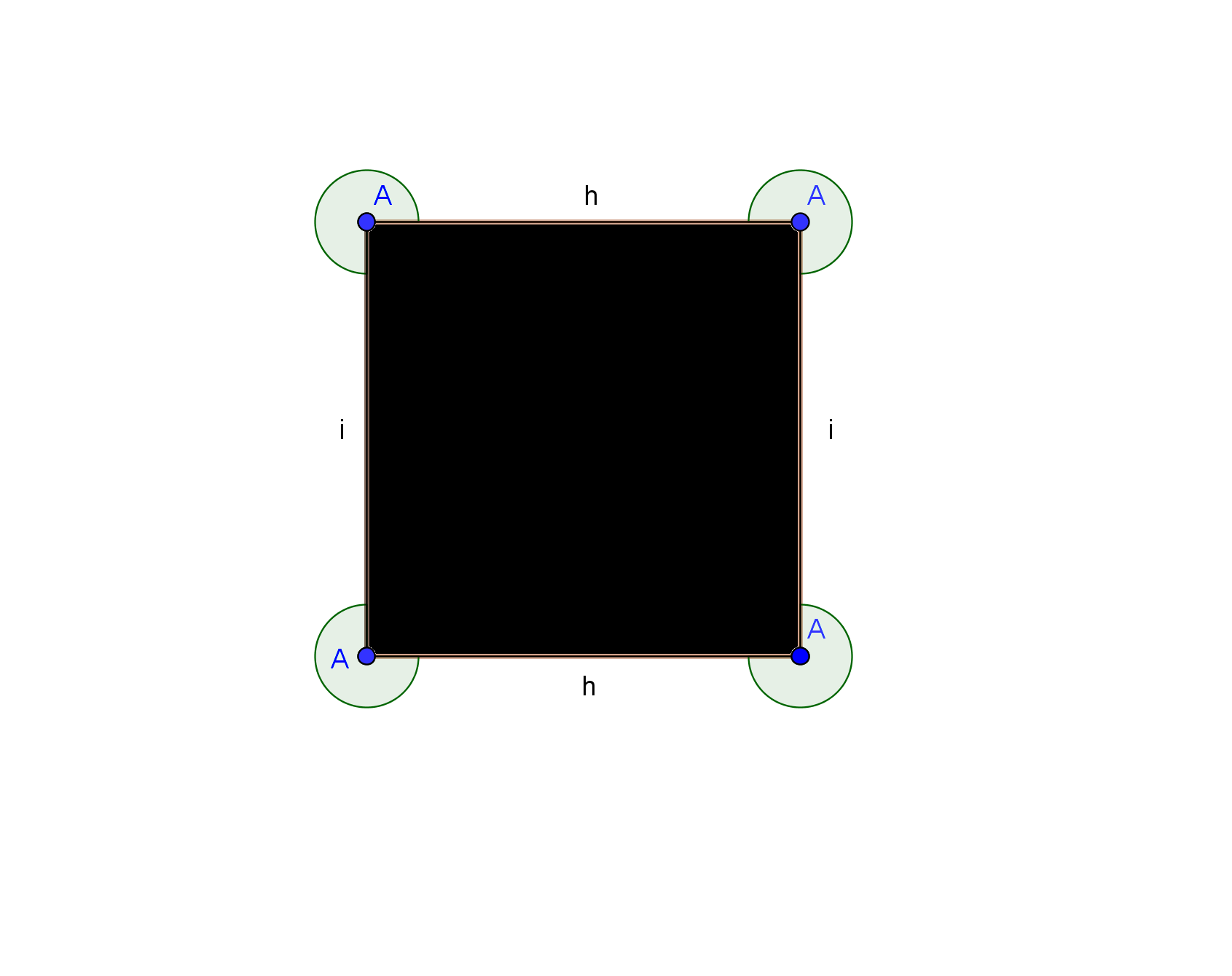}
\caption{A surface of the chamber of "degenerate" type in $\mathcal{H}(2,-2)$.}
\end{figure}

\begin{figure}
\includegraphics[scale=0.3]{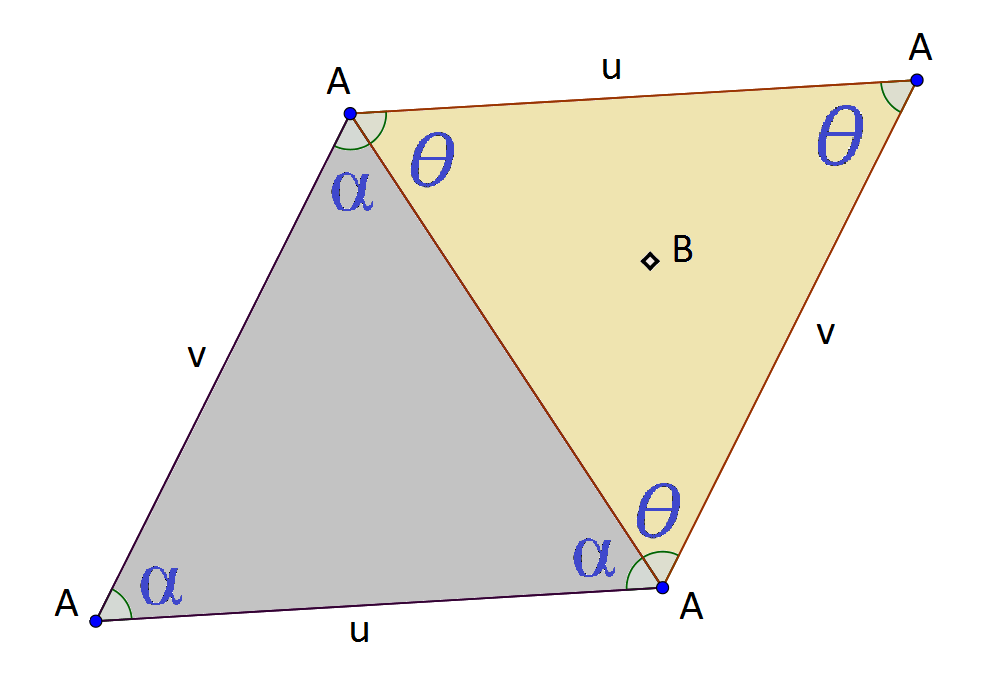}
\caption{A surface of the chamber of "triangle" type in $\mathcal{H}(3,-3)$. The pole of order three is denoted by $B$. Magnitudes of angles are $\alpha = \dfrac{\pi}{3}$ and $\theta = \dfrac{7\pi}{3}$}
\end{figure}

\begin{proof}
Following Lemma 3.5, we already know that for any flat surface in a chamber of $\mathcal{H}(a,-a)$, the maximal number of noncrossing saddle connections is $2+t$ where $t$ is the number of ideal triangles in any triangulation of the core. Besides, we have $0 \leq t \leq 2$. Number $t$ is constant in every chamber. If $t=0$, the chamber is of "degenerate type". If $t=1$, there are exactly three noncrossing saddle connections and they bound the unique ideal triangle. The chamber is of "triangle" type. Finally, if $t=2$, there are four noncrossing saddle connections. Therefore, the two ideal triangles are necessarily connected by exactly two of their sides (otherwise, there would be at least five or six noncrossing saddle connections). Thus, we have a cylinder formed by two ideal triangles and bounded by a pair of parallel saddle connections. The chamber is of "cylinder" type.\newline
\end{proof}

\subsection{Description of the chambers}

Since the walls-and-chambers structure is $GL^{+}(2,\mathbb{R})$-invariant, we will work with projectivized strata $\mathbb{P}\mathcal{H}(a,-a)$ which are the quotients of strata $\mathcal{H}(a,-a)$ by the action of $\mathbb{C}^{*}$. For each type of chamber we will choose a suitable normalization. Building from that, we will be able to provide a parametrization of each chamber and to figure out their shape. Finally, chambers of the same type will be distinguished by a system of combinatorial invariants.

\begin{prop}
In chambers of "cylinder" type of $\mathbb{P}\mathcal{H}(a,-a)$, we normalize the length and the direction of the two saddle connections that bound the core. Flat surfaces are characterized by two invariants:\newline
(i) A bipartition of $a$ as a sum of two nonzero integers. It corresponds to the repartition of a total angle of $2a\pi$ between the two sectors of the domain of the pole. This discrete invariant distinguish the different chambers of the same type in the stratum.\newline
(ii) The shape of the cylinder. The flat surfaces in the chamber are parametrized by $\mathbb{H}/\mathbb{Z}$. Consequently, every such chamber is the image of $GL^{+}(2,\mathbb{R})$ under projection.\newline
In the boundary of such chambers, the cylinder degenerates and the core becomes a pair of saddle connections. Flat surfaces of the boundary are parametrized by the real ratio between their lengths.\newline
These chambers are topological disks with a unique boundary component. They have a cusp in their interior (corresponding to the case of an infinitely thin cylinder) and a cusp in their boundary (if one saddle connection in the core shrinks).\newline
There is a chamber of "cylinder type" for every integer unordered bipartition $a=k+k'$ where $k,k'>0$. The chamber corresponding to the bipartition $a=k+k'$ belongs to the connected component where the rotation number of surfaces is $gcd(k,a)$.
\end{prop}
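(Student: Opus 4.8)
The plan is to build an explicit flat surface from combinatorial data and then show that this data parametrizes the chamber. First I would set up a concrete model: normalize the two boundary saddle connections of the core to be horizontal of length one, so that the core is a cylinder of circumference one whose modulus $\tau$ varies in $\mathbb{H}$; the residual ambiguity in choosing a cross-section of the cylinder produces the quotient $\mathbb{H}/\mathbb{Z}$. The domain of the pole (Lemma 3.3) is then glued along the two boundary saddle connections. Because the pole has order $a$, the neighborhood of the pole is an infinite cone of angle $(2a-2)\pi$ with a compact piece removed; gluing it back to the cylinder along the two saddle connections forces the total angle $2a\pi$ "seen from outside the core" to be split between the two sides of the cylinder as $2k\pi$ and $2k'\pi$ with $k+k'=a$ and $k,k'>0$. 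This is exactly invariant (i). I would check that the partition is unordered because the two saddle connections of the core are interchangeable (there is an orientation-preserving relabeling), and that $k,k'\ge 1$ because a zero angle would mean the saddle connection is not actually on the boundary of the core.

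Next I would verify that, conversely, for each unordered partition $a=k+k'$ and each $\tau\in\mathbb{H}/\mathbb{Z}$, the gluing produces a genuine translation surface in $\mathcal{H}(a,-a)$: one checks the cone angle at the unique zero is $(2a+2)\pi$ (the core contributes, and the angle bookkeeping around the vertex works out because $k+k'=a$), the surface has genus one (Euler characteristic / handle count of the cylinder-plus-disk configuration), and the resulting differential indeed has a single pole of order $a$ in the complementary disk. This shows the map from $\{\text{partitions}\}\times(\mathbb{H}/\mathbb{Z})$ to the "cylinder" locus is onto. Injectivity on the level of the discrete invariant is clear; injectivity in $\tau$ follows because the modulus of the core cylinder and the chosen cross-section are intrinsic once the boundary saddle connections are normalized. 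The normalization consumes exactly the $GL^+(2,\mathbb{R})$ freedom plus the projective scaling, so the chamber is a single open $GL^+(2,\mathbb{R})$-orbit, and topologically it is $\mathbb{H}/\mathbb{Z}$, a disk with one boundary component.

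For the boundary and cusp statements: the boundary of the chamber in the stratum is reached by letting $\Ima\tau\to 0$, i.e. the cylinder degenerates to a pair of parallel saddle connections (the "degenerate" type of Proposition 4.2(ii)), parametrized by the ratio of their lengths — this is the unique boundary component. The interior cusp is the limit $\Ima\tau\to\infty$ (an infinitely tight/long cylinder), and the boundary cusp is where, along the degenerate locus, one of the two saddle connections of the core shrinks to zero length. Finally, for the rotation-number claim I would compute the index of a simple loop generating the core cylinder's homology: such a loop is isotopic to the waist of the cylinder, and going once around it the tangent direction turns by an amount determined by the angle $2k\pi$ accumulated on one side, giving index $\equiv k \pmod a$ relative to the other homology generator; combined with the index $a+1$ of a loop around the pole (hence $\equiv 1$, but the relevant gcd is with $a$ and $ind(\gamma)$), one gets $rot(X)=\gcd(k,a)$. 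The main obstacle I expect is the careful angle bookkeeping in this last step — making the identification $ind(\text{waist loop}) = k$ precise requires tracking the developing map around the cylinder and through the pole domain, and getting the gcd to come out as $\gcd(k,a)$ rather than, say, $\gcd(k,k')$ (which is the same thing since $k+k'=a$, but the argument must produce it in the right form using Definition 3.6).
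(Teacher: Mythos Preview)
Your plan matches the paper's proof essentially step for step: normalize the two boundary saddle connections, read off the discrete invariant as the unordered partition $a=k+k'$ from the two angular sectors of the pole domain (total $2a\pi$ since the two ideal triangles of the cylinder eat $2\pi$ of the cone angle $(2a+2)\pi$), parametrize the continuous moduli by the cylinder shape in $\mathbb{H}/\mathbb{Z}$, and describe the boundary/cusps exactly as you do. The paper's argument is terser---it does not spell out the inverse construction or the genus check---but the logical skeleton is the same.

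One small correction on the point you yourself flagged: the waist curve of the cylinder is a closed geodesic, so its index is $0$, not $k$. The loops whose indices are $k$ and $k'$ are the ones that \emph{cross} the cylinder once and then return through the pole domain, circling one of the two angular sectors of size $2k\pi$ or $2k'\pi$; the turning is accumulated entirely in the pole domain. With the waist loop (index $0$) and one such crossing loop (index $k$) as a homology basis, Definition~3.3 gives $rot(X)=\gcd(a,0,k)=\gcd(a,k)$, which is what you want. This resolves the obstacle you anticipated and is exactly how the paper phrases it (``loops passing through the cylinder'').
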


\begin{proof}
The holonomy vector of the two parallel saddle connections that form the boundary of the core is normalized. The geometry of the flat surface is determined by the shape of the cylinder and the two angles of the domain of the pole. These two angles are non zero integer multiples of $2\pi$ and these multiples appear as the topological indices of loops passing through the cylinder. These numbers are discrete invariants and are thus constant in a given chamber. Since the total angle is $(2a+2)\pi$ and there are two ideal triangles in the cylinder, the two angles sum to $2a\pi$. They can be encompassed in an integer unordered bipartition of $a$. Topological indices of the loops determine to which connected component a chamber belongs to.\newline
The shape of the cylinders depends on the choice of a holonomy vector between the two ends of the cylinder. Two such holonomy vectors whose difference is an integer multiple of the period of the closed geodesics of the cylinder represent the same cylinder (Dehn twist). If we choose the periods of the closed geodesics of the cylinder to be equal to $1$, the periods of the holonomy vectors from one end to the other end are complex numbers with positive imaginary part. The difference between two such periods is an integer real number (one for each Dehn twist). Therefore, the chamber is parametrized by $\mathbb{H}/\mathbb{Z}$. Since the action of $GL^{+}(2,\mathbb{R})$ is transitive on bases of $\mathbb{R}^{2}$, it is immediate that two flat surfaces of the same chamber belong to the same $GL^{+}(2,\mathbb{R})$-orbit. A cusp corresponds to an infinitely long holonomy vector.\newline
There is a unique boundary component for a chamber of "cylinder" type whose characteristic bipartition is $k+(a-k)$. It is formed by flat surfaces with degenerate core whose four angles are $\pi,2k\pi,\pi,(a-k)2\pi$. A way to parametrize surfaces of the boundary is to use the real ratio between the lengths of the two saddle connections. Since we do not distinguish between the two saddle connections, we have a unique cusp in the wall of the chamber (corresponding to the case where one of the two saddle connections becomes infinitely longer than the other).
\end{proof}

The description of chambers of "triangle" type needs the introduction of a specific combinatorial structure. We would like to have partitions of an integer $a$ into three parts that would be cyclically ordered. This structure should be sufficiently well-behaved to be acted upon by some linear operators.

\begin{defn}
For any $a \geq 3$, we define $\mathcal{E}_{a} \subset (\mathbb{Z}/a\mathbb{Z})^{3}$ such that $(x,y,z) \in \mathcal{E}_{a}$ if and only if $x,y,z \neq 0$ and $x+y+z=0$. We also define $\mathcal{E}_{a}^{k} = \lbrace (x,y,z \in \mathcal{E}_{a}~\vert~ gcd(x,y,z)=k \rbrace$.\newline
We consider the linear applications $S:(x,y,z)\rightarrow (-x,-y,-z)$ and $T:(x,y,z) \rightarrow (y,z,x)$. The group $Gr(S,T)$ generated by $S$ and $T$ acts on $\mathcal{E}_{a}$ while preserving the stratification $\mathcal{E}_{a} = \bigcup \limits_{k \vert a} \mathcal{E}_{a}^{k}$.\newline
We denote by $\mathcal{B}_{a}^{k} = \mathcal{E}_{a}^{k} / Gr(S,T)$ the set of \textit{cyclic tripartitions} of $a$ with $k$ as greatest common factor.
\end{defn}

In chambers of "triangle" type, the discrete invariant is more complicated. There are three saddle connections and six angular sectors (three that belong to the triangle and three that belong to the domain of the pole). For every external angle, there is an internal angle such that the difference between their magnitudes is an integer multiple of $2\pi$. These three numbers corresponding to the three sides of a cylinder form a \textit{cyclic tripartition} of $a$.

\begin{prop}
In chambers of "triangle" type of $\mathbb{P}\mathcal{H}(a,-a)$, we normalize the area of the ideal triangle to one. Flat surfaces are characterized by two invariants:\newline
(i) A cyclic tripartition of $a$, that is an element of $\mathcal{B}_{a}$. This discrete invariant distinguishes the different chambers of the same type in the stratum.\newline
(ii) The shape of the triangle (angles and side lengths). Consequently, every such chamber is the image of $GL^{+}(2,\mathbb{R})$ under projection.\newline
In the boundary of such chambers, the ideal triangle degenerates and the core becomes a pair of saddle connections. Flat surfaces of the boundary are parametrized by the real ratio between their lengths.\newline
These chambers are topological disks with three boundary components drawn between cusps.\newline
There is a chamber of "triangle" type for every \textit{cyclic tripartition} of $a$.
The chamber corresponding to a tripartition $a=k+k'+k''$ belongs to the connected component where the rotation number of surfaces is $gcd(k,k',k'',a)$.\newline
If $a$ is of the form $3b$, the cyclic tripartition $a=b+b+b$ provides a chamber with an additional symmetry of order three. Such a chamber is a topological disk with a unique boundary component. There is a unique cusp in the boundary and an orbifold point of order three in the interior of the chamber (corresponding to an equilateral triangle).
\end{prop}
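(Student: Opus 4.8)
The plan follows the template of the proof of Proposition 4.2, treating separately the continuous modulus (the shape of the triangle), the discrete invariant (the cyclic tripartition), and the behavior at the discriminant.

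\textbf{Shape of the triangle and the open orbit.} The first step is to show that each "triangle"-type chamber is a single open $GL^{+}(2,\mathbb{R})$-orbit. Take $X$ in such a chamber; by Proposition 4.1 its core is a single ideal triangle all of whose vertices are the zero of $\phi$, and by Lemma 3.7 the three sides $s_1,s_2,s_3$ form a maximal system of non-crossing saddle connections, so their holonomy vectors are local period coordinates near $X$ (see \cite{Zo}); these vectors sum to zero and are linearly independent over $\mathbb{R}$, i.e. they are the sides of a non-degenerate Euclidean triangle. I would then argue that $X$ is recovered from its core together with the gluing of the domain of the pole (Lemma 3.5) along $\partial\mathcal{C}(X)$ — whose angular sectors are fixed by the discrete data of the next step — so that two surfaces of the same discrete type with affinely equivalent cores are isomorphic; since $GL^{+}(2,\mathbb{R})$ acts transitively on non-degenerate triangles carrying the cyclic order of their sides induced by the orientation of $X$, the chamber is a single open orbit. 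Normalizing the area of the triangle selects a slice, the remaining modulus is the similarity class of the triangle, and so the projectivized chamber is the quotient of $\mathbb{H}\cong GL^{+}(2,\mathbb{R})/\mathbb{C}^{*}$ by the stabilizer in $Gr(S,T)$ of its class — trivial for every class except $(b,b,b)$ (treated below), hence a topological disk.

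\textbf{The cyclic tripartition and existence.} Next I would identify the discrete invariant. At the zero the six angular sectors split into three internal ones $\alpha_1,\alpha_2,\alpha_3$ (inside the triangle, $\sum\alpha_i=\pi$) and three external ones $\beta_1,\beta_2,\beta_3$ on the pole side ($\sum\beta_i=(2a+2)\pi-\pi=(2a+1)\pi$, the cone angle at the zero being $(2a+2)\pi$); matching, as explained before the statement, each external sector to an internal one across a side $s_i$, the difference $\beta_i-\alpha_i$ should be a positive multiple $2\pi k_i$ of $2\pi$, and then $\sum k_i=a$ forces $0<k_i<a$, so $(k_1,k_2,k_3)\in\mathcal{E}_{a}$. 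Only the cyclic order of the sides and the direction in which $\partial\mathcal{C}(X)$ is read are intrinsic — precisely the ambiguity $Gr(S,T)$ of Definition 4.3 — so the well-defined invariant is the class in $\mathcal{B}_{a}$; it is locally constant (the $k_i$ vary continuously), hence constant on each chamber, while the identification of the previous step shows that the surfaces realizing a fixed class form a single connected chamber, so the cyclic tripartition is a complete invariant of the "triangle"-type chambers. For existence I would start from any non-degenerate Euclidean triangle, prescribe the external sectors $\beta_i=\alpha_i+2\pi k_i$, and glue in an infinite cone of angle $(2a-2)\pi$ with a compact neighborhood of its apex removed; the total cone angle is then $\pi+\sum\beta_i=(2a+2)\pi$, so this produces a flat surface of $\mathcal{H}(a,-a)$ realizing any prescribed class of $\mathcal{B}_{a}$, in the spirit of the constructions in \cite{Bo}.

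\textbf{Discriminant, rotation number, and the symmetric chamber.} Finally I would analyze the boundary. By Definition 3.8 the chamber meets the discriminant exactly when two consecutive boundary saddle connections make an angle $\pi$, i.e. when one internal angle tends to $\pi$, the triangle collapses to three collinear vertices, and the core becomes the pair formed by the two shorter sides; there are three such collapses, one per choice of middle vertex, giving three boundary arcs of the disk, each parametrized by the ratio of the two remaining lengths, and the three corner cusps are the points where one of the three sides shrinks to zero. For the rotation number I would compute the index of loops realizing a homology basis — e.g. loops running along a side and closing up through the domain of the pole — reading it off the angular sectors; the non-integer parts $\sum\alpha_i$ cancel, each index is an integer combination of $k_1,k_2,k_3$, and so $rot(X)=gcd(a,ind(\beta),ind(\gamma))=gcd(k_1,k_2,k_3,a)=gcd(k_1,k_2,k_3)$ (using $a=\sum k_i$), placing the chamber with class in $\mathcal{B}_{a}^{k}$ inside $\mathcal{C}_{k}^{a}$. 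A direct check shows $(b,b,b)$ is the only class fixed by a non-trivial element of $Gr(S,T)$, with stabilizer $\langle T\rangle\cong\mathbb{Z}/3$, so the corresponding chamber is $\mathbb{H}/\langle T\rangle$ with $T$ acting on the shape parameter by an order-three M\"obius map fixing only the equilateral shape — a topological disk with a single order-three orbifold point (the equilateral surface), its three boundary arcs and three corner cusps folding into one arc and one cusp.

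\textbf{Main difficulty.} The orbit count and the cusp bookkeeping are routine; the crux will be the combinatorics of the discrete invariant — verifying that the external/internal matching along the sides produces a triple in $\mathcal{E}_{a}$, that the only intrinsic labeling ambiguities are $S$ and $T$ (so $\mathcal{B}_{a}$, not a finer or coarser quotient, is the right object), and that an honest index computation on a homology basis yields $gcd(k_1,k_2,k_3)$ as the rotation number.
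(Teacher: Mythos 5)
Your proposal is correct and follows essentially the same route as the paper's proof: the discrete invariant is extracted from the three multiples of $2\pi$ separating matched internal and external angular sectors (summing to $a$ and well defined only up to $Gr(S,T)$), the shape of the triangle is the continuous modulus making each chamber a single open $GL^{+}(2,\mathbb{R})$-orbit, the three ways the triangle can flatten give the three boundary arcs separated by cusps, and the $(b,b,b)$ class is the unique one with a $\mathbb{Z}/3$ stabilizer producing the orbifold point. You supply more detail than the paper does (the explicit gluing construction for existence, the parametrization of shapes by $\mathbb{H}$, and the index computation yielding $\gcd(k_1,k_2,k_3)$), but no step differs in substance.
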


\begin{proof}
The geometry of the flat surface is determined by the three angles of the triangle and the three angles of the domain of the pole. For each internal angle, there is an external angle that is formed by the same pair of saddle connections. Their magnitudes differ by a nonzero integer multiple of $2\pi$ (nonzero because the magnitude of an angle of a domain of a pole is at least $\pi$). Besides, this number is the topological index of a loop getting around these two angles. These numbers are discrete invariants and are thus constant in a given chamber.\newline
These three numbers form a tripartition $x+y+z=a$. For practical purposes, we represent them as triplets $(\overline{x},\overline{y},\overline{z})$ of elements of $\mathbb{Z}/a\mathbb{Z}$ such that $\overline{x}+\overline{y}+\overline{z} = 0$. If we consider them up to cyclic permutation and global inversion, every such equivalence class appears from a tripartition of $a$. Indeed, if we have $x+y+z = 2a$, then we have $(a-x)+(a-y)+(a-z)=a$.\newline
Since elements of the discrete invariant are topological degrees of loops, their greatest common divisor decides which connected component the chamber belongs to.\newline
Two nondegenerate triangles are always equivalent under the action of $GL^{+}(2,\mathbb{R})$. Therefore, the interior of a chamber of "triangle" type is an open $GL^{+}(2,\mathbb{R})$-orbit.\newline
In the boundary of the chamber, the triangle degenerates and we get a surface with only two saddle connections. We assume the discrete invariant of the chamber is $(x,y,z)$. We get a flat surface with degenerate core whose four angles are $\pi,2x\pi,\pi+2y\pi,2z\pi$. By permuting cyclically the numbers $x,y,z$, we get three types of flat surfaces with degenerate core. Therefore, there are three boundary components in a chamber. In each of these boundary components we parametrize surfaces using the real ratio between the lengths of the two saddle connections. The chamber is a topological disk bounded by three segments separated by cusps.\newline
In the case $a=3b$, there is a chamber corresponding to the cyclic tripartition $(b,b,b)$. Such a chamber exhibits an exceptional symmetry. The flat surface of the chamber whose triangle is equilateral is an orbifold point of order three and there is a unique boundary component (the topological disk has been quotiented by the group of rotation of order three).
\end{proof}

We end the description of chambers with the case of chambers of "degenerate" type. In such chambers, flat surfaces are characterized by four angles and the length of the two saddle connections forming the core. One of the main differences with the other types of chambers is that they can be formed by several open $GL^{+}(2,\mathbb{R})$-orbits.\newline

For reasons of convenience, we draw a distinction between two classes of chambers of "degenerate" type. If there is a pair of opposed angles with exactly the same magnitude, we refer to the chamber as a chamber of "balanced degenerate" type. Otherwise, we refer to the chamber as a chamber of "unbalanced degenerate" type. The following proposition proves the consistency of the distinction.

\begin{prop}
The difference of magnitude between each pair of opposed angles in flat surfaces that belong to chambers of "degenerate" is a discrete invariant. 
\end{prop}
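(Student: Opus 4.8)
The plan is to reduce the assertion to an elementary computation with the developing map along $\partial\mathcal{C}(X)$ and then to a connectedness argument. First I would pin down the combinatorial picture of a surface $X$ in a degenerate chamber. By Proposition~4.1 the core of $X$ is a pair of saddle connections, and since the only conical singularity of $X$ other than the pole is the unique zero $v$ (of cone angle $(2a+2)\pi$), $core(X)=\partial\mathcal{C}(X)$ is a wedge of two loops $s_{1},s_{2}$ based at $v$. By Lemma~3.5 the complement $X\setminus core(X)$ is a single disk containing the pole, so $X$ deprived of the pole deformation-retracts onto this wedge and the boundary of that disk, read on the wedge, is the commutator $s_{1}s_{2}s_{1}^{-1}s_{2}^{-1}$ (up to relabelling); in particular the four half-edges of $s_{1}$ and $s_{2}$ alternate around $v$, and $\partial\mathcal{C}(X)$ is a closed broken geodesic with exactly four corners, all at $v$, whose angles $\beta_{1},\beta_{2},\beta_{3},\beta_{4}$ (listed cyclically) are the four angles of the degenerate surface, with $\beta_{1}+\beta_{2}+\beta_{3}+\beta_{4}=(2a+2)\pi$. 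These four corners meet $v$ in the same cyclic order as the four sectors they cut out, so the two pairs of opposed angles are precisely $\{\beta_{1},\beta_{3}\}$ and $\{\beta_{2},\beta_{4}\}$.

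The key step is a developing-map identity. Develop $\partial\mathcal{C}(X)$ into $\mathbb{C}$ via the translation atlas: it becomes a closed broken line of four segments, in cyclic order with holonomy vectors $w_{1},w_{2},w_{3},w_{4}$, where the $i$-th and the $(i+2)$-th segment cover the same saddle connection. Because the self-gluing along that saddle connection is a translation, these two segments are anti-parallel of equal length, i.e.\ $w_{i+2}=-w_{i}$. On the other hand, following the turning along the broken line on a continuous branch of $\arg$, one has $\arg w_{i+2}-\arg w_{i}=(\pi-\beta_{i})+(\pi-\beta_{i+1})=2\pi-\beta_{i}-\beta_{i+1}$ as real numbers --- this is mere additivity of turning and remains valid even though some $\beta_{j}$ exceed $2\pi$. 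Since $w_{i+2}=-w_{i}$ forces $\arg w_{i+2}-\arg w_{i}\equiv\pi\pmod{2\pi}$, we get $\beta_{i}+\beta_{i+1}\equiv\pi\pmod{2\pi}$ for every $i$, and hence $\beta_{1}-\beta_{3}=(\beta_{1}+\beta_{2})-(\beta_{2}+\beta_{3})\in 2\pi\mathbb{Z}$ and likewise $\beta_{2}-\beta_{4}\in 2\pi\mathbb{Z}$. (Exactly as in the proof of Proposition~4.4, one may instead recognise $\tfrac{1}{2\pi}(\beta_{1}-\beta_{3})$, up to an additive combinatorial constant, as the topological index of a simple loop encircling the two opposed sectors.)

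Finally I would conclude by continuity. Along a fixed chamber the embedded graph $\partial\mathcal{C}(X)$, and thus the wedge $s_{1}\vee s_{2}$ together with its cyclic structure at $v$, does not change, so $\beta_{1},\beta_{2},\beta_{3},\beta_{4}$ are well-defined continuous real-valued functions on the chamber and the labelling of opposed pairs is unambiguous. Each of $\beta_{1}-\beta_{3}$ and $\beta_{2}-\beta_{4}$ is then a continuous map from a connected set into the discrete set $2\pi\mathbb{Z}$, hence constant --- which is precisely the assertion, and in particular the dichotomy between ``balanced'' and ``unbalanced'' degenerate chambers is well posed. I expect the only delicate point to be the bookkeeping in the first two paragraphs --- fixing the cyclic order of the four half-edges at $v$ so that ``opposed'' is matched correctly to the developing-map relation --- rather than any genuine analytic obstacle.
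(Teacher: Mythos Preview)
Your proof is correct and follows the same two-step approach as the paper: first show that opposite angles differ by an integer multiple of $2\pi$, then invoke continuity on the connected chamber. The paper's own proof is a two-line sketch --- it simply asserts that ``just like in a parallelogram, the difference between the magnitudes of two opposite angles is $2k\pi$'' and that the angles vary continuously --- whereas you carry out that parallelogram argument in full via the developing map and make explicit why the commutator boundary word forces $w_{i+2}=-w_{i}$; your version is more detailed but not substantively different.
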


\begin{proof}
The geometry of the flat surface is determined by the four angles of the domain of the pole and the length of the two saddle connections. Just like in a parallelogram, the difference between the magnitudes of two opposite angles is $2k\pi$ where $k$ is an integer. The angles vary continuously inside the chamber so this number is a discrete invariant of the chamber.\newline
\end{proof}

We begin with a description of the chambers of "balanced degenerate" type. They may feature additional symmetries and include an orbifold point.

\begin{prop}
In strata $\mathbb{P}\mathcal{H}(a,-a)$, chambers of "balanced degenerate" type are distinguished by a bipartition $k+k'$ of $a$. For any such bipartition where $k,k' \geq 1$, there is a unique chamber of "balanced degenerate" type. The chamber belongs to the connected components where the rotation number of the flat surfaces is $gcd(k,a)$.\newline
If $a$ is an even number, the chamber associated to the bipartition $\dfrac{a}{2}+\dfrac{a}{2}$ has an exceptional symmetry. The chamber is a topological disk with a unique cusp in the boundary and unique boundary component. Besides, there is an orbifold point of order two inside the chamber (corresponding to a surface where the two saddle connections have same length and the four angles are equal).\newline
Any other chamber of "balanced degenerate" is a topological disk with two boundary components separated by two cusps.\newline
In the boundary of such chambers, the saddle connections share the same direction and at least one angle has magnitude equal to $\pi$. Such flat surfaces are parametrized by the real ratio between the lengths of the saddle connections.
\end{prop}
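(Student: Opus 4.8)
The plan is to parametrize the balanced degenerate chambers concretely and then read off their topology. By the $GL^{+}(2,\mathbb{R})$-invariance of the discriminant (Lemma 3.9) I may work in $\mathbb{P}\mathcal{H}(a,-a)$. For a surface of degenerate type, Proposition 4.1 says $core(X)$ is a pair of saddle connections; by Lemma 3.5 ($X\setminus core(X)$ is a disk) and a short topological count, these form a wedge of two loops $\sigma_{1},\sigma_{2}$ based at the zero $Z$, with $X\setminus core(X)$ a disk whose boundary runs along the commutator word $\sigma_{1}\sigma_{2}\sigma_{1}^{-1}\sigma_{2}^{-1}$. This disk has four angular sectors $\theta_{1},\theta_{2},\theta_{3},\theta_{4}$ at $Z$, listed cyclically, each at least $\pi$, with $\theta_{1}+\theta_{2}+\theta_{3}+\theta_{4}=(2a+2)\pi$ (the full cone angle at $Z$, the core occupying no angle). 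The corners carrying $\theta_{1}$ and $\theta_{3}$ are opposite, so their incident edge directions are reversed from one another; hence $\theta_{1}\equiv\theta_{3}$ and likewise $\theta_{2}\equiv\theta_{4}\pmod{2\pi}$, and by the preceding Proposition the integers $\frac{1}{2\pi}(\theta_{1}-\theta_{3})$ and $\frac{1}{2\pi}(\theta_{2}-\theta_{4})$ are constant on a chamber. That the chamber be of balanced degenerate type means one of them vanishes; after relabelling I assume $\theta_{1}=\theta_{3}$ throughout.

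First I would pin down the discrete invariant. Writing $\theta_{2}-\theta_{4}=2m\pi$ (constant) and using $2\theta_{1}+\theta_{2}+\theta_{4}=(2a+2)\pi$ gives $\theta_{2}=(a+1+m)\pi-\theta_{1}$ and $\theta_{4}=(a+1-m)\pi-\theta_{1}$; the inequalities $\theta_{i}\geq\pi$ confine $\theta_{1}$ to a nonempty open interval precisely when $|m|\leq a-2$. Setting $k=\frac{a+m}{2}$ and $k'=a-k$ then exhibits an unordered partition $a=k+k'$ into positive parts, and every such partition occurs. For existence, uniqueness and the rotation number I would degenerate $\theta_{1}\to\pi$: the four angles tend to $\pi,2k\pi,\pi,2k'\pi$ cyclically, which is exactly the degenerate surface lying on the unique boundary component of the cylinder chamber of partition $k+k'$ (Proposition 4.2). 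This identifies the limiting surface and hence pins down the remaining combinatorial data (it forces the two angles tending to $\pi$ to wind trivially, so that the core really is the wedge of two loops), yielding exactly one balanced degenerate chamber per partition; and since this limiting surface lies in the closure of both chambers, the balanced degenerate chamber of partition $k+k'$ lies in the same connected component as the cylinder chamber of that partition, whose rotation number is $\gcd(k,a)$ by Proposition 4.2.

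Next I would treat the topology in the generic case $k\neq k'$, i.e.\ $m\neq0$. All four angles depend only on $\arg w$, where $w=v(\sigma_{2})/v(\sigma_{1})\in\mathbb{C}^{*}$, while $|w|$ --- the ratio of the two lengths --- is free; so in the coordinates $(\arg w,\log|w|)$ the chamber is an open strip, a topological disk. Its two boundary arcs are the two ends of the $\theta_{1}$-interval: on one, $\theta_{1}=\theta_{3}=\pi$; on the other, the relevant member of the unbalanced pair equals $\pi$. Along each, $\sigma_{1}$ and $\sigma_{2}$ become parallel (the developed polygon degenerates), at least one angle equals $\pi$, and the surface is parametrized by the real ratio $|w|$, as asserted. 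The two ends $|w|\to 0,\infty$ --- one saddle connection shrinking against the other --- are cusps of $\mathbb{P}\mathcal{H}(a,-a)$, so the closure of the chamber is a bigon: a topological disk bounded by two arcs meeting at two cusps.

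Finally, the exceptional case $a$ even, $k=k'=a/2$, i.e.\ $m=0$. Here both opposite pairs of angles coincide, the combinatorial type is invariant under exchanging $\sigma_{1}$ and $\sigma_{2}$, and this relabelling acts on the parameter strip by the involution $w\mapsto-1/w$, i.e.\ $(\arg w,\log|w|)\mapsto(\pi-\arg w,-\log|w|)$. Since the $\theta_{1}$-interval is now symmetric about its midpoint, this involution has exactly one fixed point in the interior of the chamber --- the surface with all four angles equal and the two saddle connections of equal length --- hence an orbifold point of order two. As the involution interchanges the two boundary arcs and interchanges the two cusps, the quotient is a topological disk with a single boundary arc, one cusp on it, and one interior orbifold point of order two; for $k\neq k'$ the analogous relabelling acts freely and no orbifold point appears, recovering the generic picture. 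The step I expect to be the real obstacle is the uniqueness claim --- ruling out any residual integral freedom in the combinatorial type once $\theta_{1}=\theta_{3}$, the constraints $\theta_{i}\geq\pi$, and the degeneracy of the core (the wedge of two loops being genuinely the convex hull of $Z$) are imposed --- and I would extract it from the degeneration $\theta_{1}\to\pi$ and the matching with the cylinder chamber above, which at one stroke gives existence, uniqueness, the rotation number, and one of the two boundary descriptions.
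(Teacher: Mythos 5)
Your argument is correct and establishes everything the proposition claims, but it takes a genuinely different route from the paper in two places. Where the paper encodes the discrete invariant through the sums of consecutive angles (equal to $(2k+1)\pi$ and $(2k'+1)\pi$) and proves uniqueness by following a linear path in the four-dimensional space of angle magnitudes, you solve for all four angles in terms of $\theta_{1}$ and exhibit the chamber globally as the open strip in the coordinates $(\arg w,\log|w|)$, $w$ being the ratio of the two periods, cut out by the constraints $\theta_{i}>\pi$. This buys you the topology (a disk), the two boundary arcs, the two cusps at $|w|\to 0,\infty$, the boundary parametrization by the length ratio, and --- via the explicit involution on the strip when $k=k'$ --- the orbifold point of order two and the single boundary component, all in one picture, where the paper treats these by separate ad hoc remarks. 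Second, you obtain existence, uniqueness and the rotation number by degenerating $\theta_{1}\to\pi$ onto the wall of the cylinder chamber of partition $k+k'$, whereas the paper reads the rotation number directly off the topological indices of a homology basis and does not really address existence; your route anticipates a piece of the adjacency analysis of Proposition 4.8 but only invokes Proposition 4.2 and the local structure of a wall, so there is no circularity. Two small points to tidy: the integrality of $k=\frac{a+m}{2}$ requires knowing that consecutive angles sum to an \emph{odd} multiple of $\pi$ (the two edge directions at such a corner are a vector and its reverse), which your congruences $\theta_{1}\equiv\theta_{3}$ and $\theta_{2}\equiv\theta_{4}\pmod{2\pi}$ alone only give modulo $\pi$; and the sign in the involution $w\mapsto -1/w$ versus $w\mapsto 1/w$ depends on orientation conventions and should be checked so that its unique fixed point (all four angles equal to $\frac{(a+1)\pi}{2}$, equal lengths) indeed lies in the interior of the strip.
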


\begin{proof}
In a chamber of "degenerate" type, there are four corners in the boundary of the core (see Figure 3). They are cyclically ordered around the unique conical singularity. There is a pair of opposite angles with the same magnitude. Besides, the sum of the magnitudes of consecutive angles is an invariant of such chambers because it is given by the topological index of a loop. Therefore, the four angles are of the form $\alpha,\pi-\alpha+2k\pi,\alpha,\pi-\alpha+2k'\pi$ where $\alpha > \pi$, $k$ and $k'$ are topological indices of loops forming a homology basis of the torus and $k+k'=a$. Bipartition $k+k'$ of $a$ is a discrete invariant of the chamber and the chamber belongs to the connected component where flat surfaces have a rotation number of $gcd(k,a)$.
\newline
A flat surface surface that belongs to such a chamber is characterized by the lengths of its two saddle connections and its angles $\alpha,\pi-\alpha+2k\pi,\alpha,\pi-\alpha+2k'\pi$. We consider two such surfaces $X_{0}$ and $Y$. For any pair of such surfaces $X_{0}$ and $Y$, we can consider the path $X_{t}$ formed by surfaces whose saddle connections have the same length as that of $X_{0}$ and such that $\alpha_{t}=\alpha+t.(\beta-\alpha)$ where two angles of $Y$ have magnitude $\beta$. The angles stay clearly above $\pi$ all along the path so for every $t \in [0;1]$, $X_{t}$ belongs to the same chamber. Since they have the same angles, $X_{1}$ and $Y$ belong to the same $GL^{+}(2,\mathbb{R})$-orbit and thus to the same chamber. Therefore, two flat surfaces with the same discrete invariant belong to the same invariant. This construction also ensures that the chamber is contractible.\newline
If $a$ is an even number, we can consider the chamber whose discrete invariant is $\dfrac{a}{2}+\dfrac{a}{2}$. In the chamber, opposite angles are equal and the flat surfaces of the chamber have a symmetry of order two. Besides, there is flat surface whose four angles are equal and whose saddle connections have the same length. This surface is exceptionaly symmetric (order four) and corresponds to an orbifold point of order two. The boundary of the chamber is formed by surfaces whose angles are $\pi,a\pi,\pi,a\pi$. Therefore, there is a unique boundary component.\newline
For any other discrete invariant, there are two boundary components. Indeed, for a flat surface in the boundary, there is an angle equal to $\pi$. Depending on the pair of opposite angles this angle belongs to, the flat surface belongs to a boundary component or the other. These pairs are combinatorially different because by hypothesis there is one where the angles are equal and another where they are different. The case where the two pairs displays equal angles has been dealt with.
\end{proof}

Chambers of "unbalanced degenerate" type are more common and do not feature additional symmetries.

\begin{prop}
In strata $\mathbb{P}\mathcal{H}(a,-a)$, chambers of "unbalanced" type are distinguished by a combinatorial invariant that is an ordered tripartition $x+y+z$ of $a$. For every such triplet, there is another triplet that corresponds to the same chamber. If $z>x$, then $(x+y)+(z-x)+x$ represents the same chamber as $x+y+z$. If $x>z$, then it is $z+(x-z)+(y+z)$ that represents the same chamber. For every such pair of ordered tripartitions of $a$, there is a unique chamber of "unbalanced" type. The chamber belongs to the connected component where the rotation number of the flat surfaces is $gcd(x,y,z)$.\newline
These chambers are topological disks with two boundary components separated by two cusps. In the boundary of such chambers, the saddle connections share the same direction and at least one angle has magnitude equal to $\pi$. Such flat surfaces are parametrized by the real ratio between the lengths of the saddle connections.
\end{prop}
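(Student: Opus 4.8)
The plan is to reuse the analysis of degenerate chambers carried out above, now carrying along the single extra parameter that an ``unbalanced'' chamber has compared with the symmetric cases. Recall that for a surface in a chamber of ``degenerate'' type the core consists of two saddle connections $e_{1},e_{2}$, necessarily loops based at the unique zero $v$; since $X$ has genus one, the domain of the pole $U = X\setminus core(X)$ is a topological disk whose boundary word, read along $\partial\mathcal{C}(X)$, can only be $e_{1}e_{2}e_{1}^{-1}e_{2}^{-1}$. Thus $U$ is a topological quadrilateral with four corners at $v$, carrying angles $\theta_{1},\theta_{2},\theta_{3},\theta_{4}\geq\pi$ with $\theta_{1}+\theta_{2}+\theta_{3}+\theta_{4}=(2a+2)\pi$. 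Writing each $\theta_{i}$ as the argument of the relevant ratio of the two holonomy vectors $v_{1}=\mathrm{hol}(e_{1})$, $v_{2}=\mathrm{hol}(e_{2})$ plus $2\pi$ times a winding number shows that $\theta_{1},\theta_{3}$ share a fractional part modulo $2\pi$ and so do $\theta_{2},\theta_{4}$, the two classes differing by $\pi$; hence $\theta_{1}-\theta_{3}$ and $\theta_{2}-\theta_{4}$ are even integer multiples of $\pi$, which are exactly the discrete invariants isolated above. First I would put the four angles into the normal form $\pi,\,2x\pi,\,(2y+1)\pi,\,2z\pi$ (up to a shift by the single free angular parameter) and read off the ordered triple $x+y+z=a$, observing that ``unbalanced'' is precisely the condition $x\neq z$.

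Next I would establish the two-to-one labelling. A degenerate chamber has two boundary faces, corresponding to the two ways $e_{1}$ and $e_{2}$ can become collinear (same sense, or opposite sense). Reading the normal form off along each of the two faces produces two ordered triples; tracking the four winding numbers through this computation yields precisely the substitution $(x,y,z)\mapsto(z,x-z,y+z)$ when $x>z$, together with its inverse $(x,y,z)\mapsto(x+y,z-x,x)$ when $z>x$. I would then check directly that these two partial substitutions are mutually inverse, preserve the sum $a$, keep all entries positive exactly under the stated inequalities, and (using $x\neq z$) generate an orbit of size exactly two on admissible triples. This gives a well-defined map from unbalanced degenerate chambers to such pairs of triples.

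To show this map is a bijection I would prove ``one chamber per pair'' exactly as in the proof of the balanced case above: given two flat surfaces carrying the same combinatorial data, first move along a straight-line path in the four-dimensional space of angle vectors that keeps the winding numbers, the differences $\theta_{1}-\theta_{3}$, $\theta_{2}-\theta_{4}$, and the sum $(2a+2)\pi$ fixed — along it every $\theta_{i}$ stays above $\pi$, so the path avoids the discriminant, which is cut out by the vanishing of $\theta_{i}-\pi$ for some $i$ — and then rescale the lengths of $e_{1}$ and $e_{2}$ to agree. Surjectivity is a direct construction (compare the constructions in \cite{Bo}), and the winding numbers being locally constant shows that distinct data label distinct chambers. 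The rotation number then follows because such a degenerate surface lies in the closure of the ``triangle''-type chamber with cyclic tripartition $(x,y,z)$, hence in the same connected component of the stratum, whose rotation number $\gcd(x,y,z,a)=\gcd(x,y,z)$ was already computed; one checks this is consistent, $\gcd(x,y,z)=\gcd(z,x-z,y+z)$.

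For the topology, fix the winding numbers: the chamber is parametrized inside $\mathbb{P}\mathcal{H}(a,-a)$ by $\bigl(\arg(v_{2}/v_{1}),\,|v_{2}/v_{1}|\bigr)$, where the requirement that all four angles exceed $\pi$ confines $\arg(v_{2}/v_{1})$ to an open interval $(\varphi_{-},\varphi_{+})$ and leaves $|v_{2}/v_{1}|\in(0,\infty)$ free; so the chamber is homeomorphic to the open strip $(\varphi_{-},\varphi_{+})\times(0,\infty)$, a topological disk. As $\arg(v_{2}/v_{1})\to\varphi_{\pm}$ one angle tends to $\pi$, producing two boundary arcs, each parametrized by the real ratio $|v_{2}/v_{1}|$ of the lengths of the two saddle connections; as $|v_{2}/v_{1}|\to 0$ or $\infty$ one saddle connection collapses and the limit no longer depends on $\arg(v_{2}/v_{1})$, so the two ends of each arc are cusps shared by both arcs. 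Hence the chamber is a disk bounded by two arcs meeting at two cusps, and in the unbalanced case the two arcs are genuinely distinct, since on one $e_{1}$ and $e_{2}$ are parallel and on the other anti-parallel — combinatorially different conditions. I expect the real work to be concentrated in the second step, namely the bookkeeping of the four winding numbers needed to pin down the exact substitution $(x,y,z)\mapsto(z,x-z,y+z)$ and to exclude any further identification of labels; the remaining steps are routine adaptations of the arguments already used for the other chamber types.
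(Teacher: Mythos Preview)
Your proposal is correct and follows essentially the same route as the paper's own proof: normalize the four angles of the pole domain to $\pi,\,2x\pi,\,(2y+1)\pi,\,2z\pi$, observe that the two walls of the chamber correspond to the two pairs of opposite angles and hence yield the two related ordered triples, and use a straight-line path in the four-dimensional angle space (exactly as in the balanced case, Proposition~4.6) to obtain uniqueness and contractibility of the chamber.

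Your write-up is somewhat more explicit than the paper's in two places. First, you parametrize the chamber concretely as a strip $(\varphi_{-},\varphi_{+})\times(0,\infty)$ in the coordinates $\bigl(\arg(v_{2}/v_{1}),\lvert v_{2}/v_{1}\rvert\bigr)$, which makes the ``disk with two boundary arcs and two cusps'' picture transparent; the paper states this conclusion without the explicit chart. Second, you read off the rotation number by passing to the adjacent ``triangle'' chamber with cyclic tripartition $(x,y,z)$, whereas the paper computes it directly from the fact that the sums of consecutive angles are the topological indices of a homology basis; both arguments are valid and give $\gcd(x,y,z)$. The one place where you should be careful is the final step distinguishing the two boundary arcs: the paper treats separately the case where the two opposite-angle differences coincide and argues by \emph{orientation} (the cyclic sequence $\pi,6\pi,3\pi,4\pi$ versus its reversal $\pi,4\pi,3\pi,6\pi$ in the example $a=6$), whereas your ``parallel versus anti-parallel'' description presumes that the two walls sit at $\arg(v_{2}/v_{1})\equiv 0$ and $\equiv\pi$. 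This is indeed what the winding-number bookkeeping gives, and it is precisely the computation you correctly flag as the crux of the argument.
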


\begin{proof}
The sum of the magnitudes of consecutive angles and the difference of the magnitudes of opposite angles are discrete invariants of such chambers. We assume the magnitude of an angle is equal to $\pi$ and define the other angles (in the cyclic order) as $2x\pi$, $(2y+1)\pi$ and $2z\pi$. Therefore, $x+y+z$ is a tripartition of $a$. Since the difference of magnitudes between opposite angles encompasses the whole discrete invariant (defined by topological indices of loops), another triplet may represent the same discrete invariant. If $z>x$, then the other triplet is $(x+y),(z-x),x$. If $x>z$, then it is $z,x-z,z+y$. The idea is than we reduce the angles of a pair of opposite angles to increase the angles of the other pair. The two tripartitions describe the two ends of this process (and therefore the two boundary components of the chamber). Since the sums of magnitudes of consecutive angles is given by the topological index of loops of a basis of the homology of the torus, the chamber belongs to the connected component where the rotation number of the flat surfaces is $gcd(x,y,z)$.\newline
Just like in the proof of Proposition 4.6, existence of a linear path relating the angles of any two flat surfaces with the same discrete invariants implies that there is a unique (and contractible) chamber for a given discrete invariant.\newline
If the difference of magnitude of the two pairs of opposite angles are not the same, then it is clear that there are two different boundary components in the chamber. The other case is when the differences are the same. In this case, there are also two different boundary components and they are distinguished by the orientation. For example, if $a=6$, the chamber corresponding to $3+1+2$ (which corresponds to the same chamber as $2+1+3$) has two boundary components where angles are respectively $\pi,6\pi,3\pi,4\pi$ and $\pi,4\pi,3\pi,6\pi$. Such chambers thus are topological disks bounded by two segments related by two cusps.
\end{proof}

\subsection{Adjacency relations between chambers}

Every chamber of every type is characterized by a discrete combinatorial invariant. In this subsection, we provide combinatorial criteria to decide which pair of chambers share a boundary component.\newline

\begin{prop}
For a connected component of a stratum $\mathbb{P}\mathcal{H}(a,-a)$, every boundary component of the walls-and-chambers structure separates a chamber of "degenerate" type and a chamber of one of the two other types.\newline
The chamber of "balanced degenerate" type corresponding to the bipartition $k+k'=a$ is connected with the chamber of "cylinder" type corresponding to the bipartition $k+k'=a$. If there is another boundary component, it is shared with the chamber of "triangle type" corresponding to the tripartition $k+k+(k'-k)$ (with $k'>k$).\newline
The chamber of "unbalanced degenerate" type corresponding to two tripartitions of $a$ (see Proposition 4.7) connects the two chambers of "triangle" type corresponding to these tripartitions.
\end{prop}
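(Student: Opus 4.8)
The plan is to use that in each of the four chamber types classified in Propositions 4.2, 4.4, 4.6 and 4.7 a boundary component is described completely and explicitly: it consists of the flat surfaces with degenerate core, that is, whose core is a pair of saddle connections meeting at four corners with cone angles $(\theta_{1},\theta_{2},\theta_{3},\theta_{4})$ --- cyclically ordered, summing to $(2a+2)\pi$, each $\ge\pi$, and at least one equal to $\pi$ --- the only remaining modulus being the ratio of the lengths of the two saddle connections. Since by Lemma 3.9 the discriminant has real codimension one, each of its connected components lies in the closure of exactly two chambers. So I would reduce the whole statement to bookkeeping: for each boundary component, read off the angle quadruple from the chamber it was described as belonging to, and recognise the \emph{same} quadruple --- through Propositions 4.2 and 4.4 --- as a boundary component of a second chamber of a different type.

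I would first dispose of the balanced degenerate chambers. From Proposition 4.6, the interior of the balanced degenerate chamber attached to a partition $k+k'=a$ has opposite equal angles $m$ and the two adjacent angle-sums equal to $(2k+1)\pi$ and $(2k'+1)\pi$; the discriminant is reached when one corner angle drops to $\pi$, and by the constancy of the adjacent sums and of the opposite-angle differences (Proposition 4.5) this can occur in at most two ways. Sending the equal pair $m$ to $\pi$ produces the quadruple $(\pi,2k\pi,\pi,2k'\pi)$, which by Proposition 4.2 is exactly the unique boundary component of the cylinder chamber attached to $k+k'$: this gives the first adjacency in the second assertion. The other possibility --- available precisely when $k\ne k'$, and then only for the one of the two remaining angles for which all four corner angles stay $\ge\pi$ --- turns the quadruple into $(\pi,2k\pi,(2(k'-k)+1)\pi,2k\pi)$ (taking $k<k'$), which matches the form $(\pi,2z_{1}\pi,(2z_{2}+1)\pi,2z_{3}\pi)$ of a triangle-chamber boundary (Proposition 4.4) with $(z_{1},z_{2},z_{3})=(k,k'-k,k)$, i.e.\ the triangle chamber with cyclic tripartition $k+k+(k'-k)$. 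This is the second assertion; it is consistent with the exceptional chamber $\tfrac{a}{2}+\tfrac{a}{2}$ having a single boundary component, since there $k'-k=0$ is not an admissible part.

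Next I would handle the unbalanced degenerate chambers by reusing the deformation from the proof of Proposition 4.7. The combinatorial invariant of such a chamber is an ordered tripartition $x+y+z=a$, with a second representative $(x+y)+(z-x)+x$ when $z>x$ (and $z+(x-z)+(y+z)$ when $x>z$). I would start from the degenerate surface with quadruple $(\pi,2x\pi,(2y+1)\pi,2z\pi)$ --- a boundary component of the triangle chamber $\{x,y,z\}$ by Proposition 4.4 --- and push angular mass from the opposite pair $(2x\pi,2z\pi)$ into the pair $(\pi,(2y+1)\pi)$ while keeping all four adjacent angle-sums, and hence the chamber, fixed. Assuming $z>x$, the first corner angle to return to $\pi$ is the one that started at $2x\pi$, and the quadruple should become, after recentring, $(\pi,2(x+y)\pi,(2(z-x)+1)\pi,2x\pi)$, which by Proposition 4.4 is a boundary component of the triangle chamber $\{x+y,z-x,x\}$ (the case $x>z$ is symmetric, producing $\{z,x-z,y+z\}$). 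These are exactly the two tripartitions carried by the chamber, so the third assertion follows. The first assertion is then immediate from the enumeration: every boundary component carries a surface with a $\pi$-angle, hence lies in the closure of a degenerate chamber, and the matchings above exhibit its second neighbour as a chamber of cylinder or triangle type; it remains only to observe that no boundary component borders two degenerate chambers (each degenerate quadruple determines its chamber, the two tripartition-names of an unbalanced chamber naming the same chamber) nor two cylinder/triangle chambers (a triangle boundary quadruple has exactly one entry equal to $\pi$ and determines its cyclic tripartition uniquely, while a cylinder boundary quadruple has two entries equal to $\pi$).

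The step I expect to be the actual obstacle is not any single computation --- each is finite --- but carrying out all the combinatorial bookkeeping correctly at once: keeping the cyclic order of the four corner angles straight while one of them is driven to $\pi$, and then matching the resulting quadruple to the \emph{right} chamber through the $(\mathbb{Z}/a\mathbb{Z})^{3}$-valued cyclic-tripartition invariant of Definition 4.3 --- in particular assigning the two ends of the Proposition 4.7 deformation to its two tripartitions in the correct order, and checking that the symmetric chambers $\tfrac{a}{2}+\tfrac{a}{2}$ and $b+b+b$, which have only one boundary component, do not break the accounting. A labelling or orientation slip is the principal risk.
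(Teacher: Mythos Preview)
Your proposal is correct and follows essentially the same route as the paper: identify each boundary component by the quadruple of corner angles of its degenerate-core surfaces, then match that quadruple across chamber types using the explicit boundary descriptions of Propositions~4.2, 4.4, 4.6 and~4.7. The paper's own proof is terser --- it argues directly from the parallelogram-type angle relations that one side of any wall must be a degenerate chamber, rather than concluding this by exhaustion as you do --- but the substantive computations (the quadruples $(\pi,2k\pi,\pi,2k'\pi)$ for the cylinder--balanced adjacency and $(\pi,2k\pi,(2(k'-k)+1)\pi,2k\pi)$ for the balanced--triangle adjacency, and the two triangle quadruples at the ends of an unbalanced chamber) are the same in both.
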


\begin{proof}
In flat surfaces that belong to the boundary of chambers, there is at least an angle whose magnitude is $\pi$ between two distinct saddle connections. These two saddle connections generate the homology of the torus. Since they share the same direction, such surfaces have a core whose area is zero (and thus is degenerate). The angles of these surfaces verify the relations (modulo $2\pi$) of parallelograms. Therefore, the only angle whose magnitude could fall below $\pi$ as we leave the boundary component is the opposite angle of the angle of magnitude $\pi$. However, these angles grow and decrease together. Therefore, there is always a chamber on a side of the boundary component where the four angles stay above $\pi$. This chamber is of "degenerate" type.\newline
In the boundary of a chamber of "cylinder" type corresponding to the bipartition $k+k'$, the four angles of the flat surfaces are $\pi,2k\pi,\pi,2k'\pi$. By transporting the magnitudes of the angles for a pair of opposite angles to the other, we get another boundary component where the four angles are $2k\pi,\pi,2k\pi,(2k'-2k+1)\pi$. Such a boundary component is shared with the chamber of "triangle" type corresponding to the tripartition $k+k+(k'-k)$. Indeed, there is a unique angle whose magnitude is $\pi$. Therefore, a perturbation can only produce a unique ideal triangle.\newline
In the boundary of a chamber of "unbalanced" type, the four angles of the flat surfaces include only one angle of magnitude $\pi$. Therefore, the two chambers that share a boundary with a chamber of "unbalanced" type are chambers of "triangle" type.
The two boundaries correspond to each pair of opposite angles. For each boundary component, there is a unique angle equal to $\pi$ and the repartition of a total angle of $(2a+1)\pi$ provide a tripartition characteristic of a chamber of "triangle" type. The two triplets corresponding to the chamber of "unbalanced" type correspond to a choice of a pair of opposite angles. Therefore, a chamber of "unbalanced" type is connected with the two chambers of "triangle" type having cyclic tripartitions corresponding to the two triplets.
\end{proof}

Now we have a full description of chambers and their relations, we are able to prove that the topology and walls-and-chambers structure of connected components of the strata only depend on the ratio between the degree $a$ of the stratum and the rotation number of the connected component.

\begin{proof}[Proof of Theorem 2.3]
We consider the stratum $\mathbb{P}\mathcal{H}(d.a,-d.a)$ and its connected component $\mathcal{C}_{d}^{a.d}$ where the rotation number of the surfaces is $d$ with $a\geq2$ and $d\geq1$. It is already proved in Theorem 2.2 that $\mathcal{C}_{d}^{a.d}$ is biholomorphic to modular curve $X_{1}\left( \dfrac{a}{d}\right)$ and therefore to connected component $\mathcal{C}_{1}^{a}$.\newline
Since the rotation number of the connected component a chamber belongs to is a greatest common divisor of numbers forming the combinatorial invariant of the chamber (cyclic tripartitions or bipartitions), invariants of the chambers of $\mathcal{C}_{d}^{a.d}$ are of the form $d.A$ where $A$ is a cyclic tripartition or a bipartition. The condition underwhich such a combinatorial data corresponds to a chamber is also homogeneous. Therefore, $A$ is the combinatorial invariant of a chamber of stratum $\mathbb{P}\mathcal{H}(a,-a)$. Such a chamber belongs to the connected component $\mathcal{C}_{1}^{a}$ where the rotation number is $1$.\newline
Similarly, the combinatorial relations that describe adjacentness relations between chambers are also homogenous of degree one. Therefore, the graph of adjacentness relations between chambers is the same in $\mathcal{C}_{d}^{a}$ and $\mathcal{C}_{1}^{a}$.\newline
Chambers are topological disks so there is a very natural CW-complex structure associated with the walls-and-chambers structure of a connected component. Since there is a correspondance between chambers of the two components and isomorphism between their graph of adjacentness, $\mathcal{C}_{d}^{d.a}$ and $\mathcal{C}_{1}^{a}$ are isomorphic as CW-complexes. This ends the proof.
\end{proof}

\subsection{Description of the exceptional components isomorphic to $X_{1}(2)$, $X_{1}(3)$ and $X_{1}(4)$}

In projectivized strata of the form $2b$ or $3b$, the connected component corresponding the rotation number $b$ displays special features. Indeed, they have orbifold special points that no not appear for example in principal components of strata $\mathbb{P}\mathcal{H}(a,-a)$ when $a\geq 4$. We provide a complete topological description of these exceptional components in order to simplify the further investigation. Since these components are isomorphic to either $\mathbb{P}\mathcal{H}(2,-2)$ or $\mathbb{P}\mathcal{H}(3,-3)$, we describe them directly. \newline
The principal connected component of $\mathbb{P}\mathcal{H}(4,-4)$ (which is isomorphic to $X_{1}(4)$) also displays a small deviation from the general formula we prove in Theorem 2.4.\newline

\begin{prop} $\mathbb{P}\mathcal{H}(2,-2)$ is a topological sphere with two punctures and an orbifold point of order two. The discriminant is formed by a unique arc connecting one puncture with itself and cutting out two chambers. These chambers are a chamber of "cylinder" type (with a puncture in its interior) and a chamber of "degenerate" type (with the orbifold point in its interior).
\end{prop}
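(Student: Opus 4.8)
The plan is to read off the entire chamber decomposition of $\mathbb{P}\mathcal{H}(2,-2)$ from the classification in Propositions 4.1--4.7 and to assemble the pieces using the adjacency rules of Proposition 4.8.

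First I would enumerate the chambers. Since $2$ has no divisor other than $1$ and $2$ itself, there is a single connected component, $\mathcal{C}_{1}^{2}$, which is all of $\mathbb{P}\mathcal{H}(2,-2)$; in particular the space is connected, and by Theorem 2.1 it is biholomorphic to $X_{1}(2)$. A chamber of "triangle" or "unbalanced degenerate" type would be labelled by a cyclic or ordered tripartition of $2$ into three strictly positive integers, which is impossible, so by Proposition 4.1 there are none. A chamber of "cylinder" type is labelled by an unordered partition $2=k+k'$ with $k,k'\geq1$ (Proposition 4.3), hence only by $1+1$; likewise a "balanced degenerate" chamber is labelled by a partition $2=k+k'$ (Proposition 4.6), hence again only $1+1$. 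So $\mathbb{P}\mathcal{H}(2,-2)$ has exactly two chambers: a "cylinder" chamber $C_{\mathrm{cyl}}$ and a "balanced degenerate" chamber $C_{\mathrm{deg}}$, both with invariant $1+1$.

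Next I would describe each chamber and glue them. By Proposition 4.3, $C_{\mathrm{cyl}}$ is a topological disk with a single boundary component, one cusp in its interior (an infinitely tight cylinder) and one cusp in its boundary. By Proposition 4.6 applied to the partition $\tfrac{a}{2}+\tfrac{a}{2}=1+1$ with $a=2$, $C_{\mathrm{deg}}$ is a topological disk with a single boundary component, a single cusp in its boundary, and an orbifold point of order two in its interior (the most symmetric surface of $C_{\mathrm{deg}}$, with four equal angles and two saddle connections of equal length). By Proposition 4.8 every boundary component of the structure separates a "degenerate" chamber from a chamber of another type; here the only "degenerate" chamber is $C_{\mathrm{deg}}$ and the only other chamber is $C_{\mathrm{cyl}}$, so by Proposition 4.8 the boundary component of $C_{\mathrm{deg}}$ labelled $1+1$ is glued to the boundary component of $C_{\mathrm{cyl}}$ labelled $1+1$; a further boundary component of $C_{\mathrm{deg}}$ would have to meet a "triangle" chamber with tripartition $1+1+0$, which does not exist, in accordance with $C_{\mathrm{deg}}$ having only one boundary component. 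Hence the discriminant is a single arc, the common boundary of the two chambers, carrying a unique cusp; both of its ends limit to that cusp, so it is a loop based at the corresponding puncture. Gluing the two disks along their boundary circles (the arc together with its cusp) yields a topological sphere; the interior cusp of $C_{\mathrm{cyl}}$ and the boundary cusp give two distinct punctures, and the order-two orbifold point lies in the interior of $C_{\mathrm{deg}}$ --- which is exactly the stated description.

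The main obstacle is this last bookkeeping of the cusps: one must verify that the two chambers meet only along the single discriminant arc, that this arc together with its unique boundary cusp closes up into a circle (so that we are gluing two disks along boundary \emph{circles}, not along arcs), and that the interior cusp of $C_{\mathrm{cyl}}$ is not identified with the boundary cusp in the compactification. One should also check that the exceptional order-two symmetry of $C_{\mathrm{deg}}$ contributes exactly one orbifold point and not a whole orbifold locus: the $z\mapsto-z$ automorphism present on every surface of this stratum (the pole being a point of $2$-torsion) is already absorbed in the identification with $X_{1}(2)$ from Theorem 2.1, so only the extra symmetry of the most symmetric surface produces an orbifold point.
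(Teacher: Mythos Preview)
Your argument is correct and follows essentially the same route as the paper: enumerate the chambers from the classification (no tripartitions of $2$, hence only the cylinder chamber $1+1$ and the balanced degenerate chamber $1+1$), read off their boundary structure and special points, and glue. A few of your proposition labels are off by one relative to the paper's numbering (the cylinder description is Proposition~4.2, not~4.3; the triangle description is~4.4, not~4.1), and your concluding ``glue two disks along their boundary circles'' is in fact cleaner than the paper's somewhat loose Euler-characteristic count, but the substance is the same.
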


\begin{proof}
Following Proposition 4.4, there is no chamber of "triangle" type because there is no tripartition of $2$. For the same reason, there is no chamber of "unbalanced degenerate" type (Proposition 4.7). Since $1+1$ is the unique partition of $2$, there is a unique chamber of "cylinder" type and a unique chamber of "balanced degenerate" type (Propositions 4.2 and 4.6). These two chambers both have a unique boundary component. Thus, there are two chambers in $\mathbb{P}\mathcal{H}(2,-2)$. These two chambers are separated by an arc connecting a puncture to itself. The chamber of "cylinder" type is a topological disk with one puncture inside (corresponding to an infinitely stretched cylinder). Besides, there is an orbifold point in the chamber of "degenerate" type (Proposition 4.6).\newline
To sum up, if we forget the puncture inside the chamber of "cylinder" type and the orbifold point, the projectivized chamber can be decomposed into two faces, one edge and one vertex. Therefore, the Euler characteristic of the surface is $2$ and the genus of the surface is zero.
\end{proof}

\begin{prop} $\mathbb{P}\mathcal{H}(3,-3)$ is a topological sphere with two punctures and an orbifold point of order three. The discriminant is formed by two arcs connecting one puncture to itself, cutting out three chambers in a linear order. These chambers are (following the order) a chamber of "cylinder" type, a chamber of "degenerate" type and a chamber of "triangle" type. The chamber of "cylinder" type contains the other puncture (corresponding to an infinitely stretched cylinder) whereas the chamber of "triangle" type contains the orbifold point (corresponding to an equilateral triangle).
\end{prop}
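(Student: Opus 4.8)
The plan is to argue exactly as in the proof of the previous proposition, except that $\mathbb{P}\mathcal{H}(3,-3)$ contains one additional chamber. First I would list all chambers using the classification obtained in Propositions 4.2, 4.4, 4.6 and 4.7. The only partition of $3$ into two positive parts is $3=1+2$, so there is precisely one chamber of ``cylinder'' type and precisely one chamber of ``balanced degenerate'' type; as $3$ is odd, the latter is of the generic kind described in Proposition 4.6, a topological disk with two boundary components separated by two cusps and whose only cusps are those two boundary cusps. There is no chamber of ``unbalanced degenerate'' type: by Proposition 4.7 such a chamber requires an ordered tripartition $x+y+z=3$ with $x,z\geq1$, $y\geq1$ and $x\neq z$, which cannot occur. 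Finally $\mathcal{E}_3/Gr(S,T)$ has a single element, the cyclic tripartition $1+1+1$, so there is exactly one chamber of ``triangle'' type, and since $3=3\cdot1$ it is the exceptional symmetric chamber of Proposition 4.4: a topological disk with one boundary component, one cusp on its boundary and an orbifold point of order three (the equilateral triangle) in its interior. All three chambers have rotation number $\gcd(1,2,3)=1$, so they all lie in the unique connected component, which is all of $\mathbb{P}\mathcal{H}(3,-3)$.

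Next I would read off the incidence relations from Proposition 4.10. The unique boundary component of the ``cylinder'' chamber attached to the partition $1+2$ is glued to one of the two boundary components of the ``balanced degenerate'' chamber attached to the same partition; the other boundary component of that degenerate chamber is glued to the ``triangle'' chamber attached to the tripartition $1+1+(2-1)=1+1+1$, that is, to the unique triangle chamber; and, since every boundary component separates a degenerate chamber from a non-degenerate one, no further gluing occurs. Hence the discriminant consists of two arcs and the three chambers sit in the linear order cylinder --- degenerate --- triangle, which is the combinatorial content of the statement.

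It remains to assemble the topology. Each of the three chambers is a closed topological disk. I would first note that the endpoint cusps of all the discriminant arcs coincide: each boundary component of the degenerate chamber has its two endpoints among the two cusps of that chamber, while the same arc, seen from the neighbouring cylinder (resp.\ triangle) chamber, is a loop based at that chamber's single boundary cusp; so the two cusps of the degenerate chamber, the boundary cusp of the cylinder chamber and the boundary cusp of the triangle chamber are one and the same point $c_1$. The compactified curve then carries a CW structure with one vertex $c_1$, two edges (the two discriminant arcs, each a loop at $c_1$) and three two-cells (the chamber disks), so its Euler characteristic equals $1-2+3=2$; being a closed connected surface, it is a sphere. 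Removing $c_1$ and the interior cusp of the cylinder chamber (the infinitely stretched cylinder) produces a twice-punctured sphere, and the only orbifold point is the order-three interior point of the triangle chamber (the equilateral triangle). This is the asserted description, and it also places the remaining puncture inside the ``cylinder'' chamber and the orbifold point inside the ``triangle'' chamber.

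There is no serious obstacle here; the proof is a careful but routine application of the structural results of the section. The points demanding attention are: establishing the completeness of the chamber list, in particular the non-existence of an ``unbalanced degenerate'' chamber and the fact that the one ``triangle'' chamber is the exceptional symmetric one; extracting from Proposition 4.10 exactly which boundary components are identified; and checking that all the boundary cusps genuinely collapse to a single point, so that the Euler-characteristic count gives a sphere rather than a surface of positive genus.
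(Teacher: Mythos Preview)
Your proposal is correct and follows essentially the same strategy as the paper: enumerate the chambers via Propositions 4.2, 4.4, 4.6 and 4.7, read off the adjacencies, and compute the Euler characteristic of the resulting cell decomposition ($3$ faces, $2$ edges, $1$ vertex). One small slip: your reference to ``Proposition 4.10'' for the incidence relations should be to Proposition 4.8, since Proposition 4.10 is precisely the statement you are proving; apart from this, your argument is a faithful --- and in places more carefully worded --- version of the paper's own proof.
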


\begin{proof}
There is a unique chamber of "cylinder" type and its characteristic partition is $1+2$ (Proposition 4.2). There is a unique chamber of "triangle" type and its cyclic tripartition is $1+1+1$ (Proposition 4.4). A chamber of "triangle" type can have three boundary components (one for each way for the triangle to degenerate). However, this chamber has a cyclic symmetry of order three. Therefore, it has a unique boundary component and an orbifold point of order three (corresponding to the equilateral triangle). Since chambers of "degenerate" type are adjacent to chambers of "cylinder" and "triangle" type only, these two chambers are connected by a unique chamber of "degenerate" type.\newline
The discriminant is formed by two arcs whose ends correspond to limit cases where some saddle connections shrink. Since the boundaries of the chambers of "cylinder" and "triangle" type are closed segments, there is only one such puncture. If we forget the puncture inside the chamber of "cylinder" type and the orbifold point inside the chamber of "triangle" type, the discriminant provides a decomposition of the stratum into three faces, two edges and one vertex. Therefore, the Euler characteristic of the surface is $2$ and the genus of the surface is zero.
\end{proof}

\begin{prop} There are two connected components in $\mathbb{P}\mathcal{H}(4,-4)$. The component where the rotation number is $2$ is isomorphic to $\mathbb{P}\mathcal{H}(2,-2)$. On the contrary, the principal connected component (where the rotation number is $1$) is a topological sphere with three punctures. There are four chambers : two that belong to the "degenerate" type and one of the other other types, see $\mathcal{G}_{4}$ in Figure 1 for its graph of adjacentness.
\end{prop}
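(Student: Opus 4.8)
The plan is to follow the pattern of Propositions 4.9 and 4.10: decompose the stratum into its connected components, enumerate the chambers of the principal component together with their graph of adjacentness, and then read the topology off an Euler characteristic count.

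Recall from Boissy's classification (the discussion preceding the proof of Theorem 2.1) that the connected components of $\mathbb{P}\mathcal{H}(4,-4)$ are indexed by the divisors of $4$ other than $4$ itself, namely $1$ and $2$, so there are exactly two. The component $\mathcal{C}_{2}^{4}$ has rotation number $2$; writing $4 = 2\cdot 2$, Theorem 2.2 identifies its walls-and-chambers structure with that of $\mathcal{C}_{1}^{2} = \mathbb{P}\mathcal{H}(2,-2)$, and Theorem 2.1 supplies the biholomorphism (both being $X_{1}(2)$), so Proposition 4.9 describes it completely. It remains to treat the principal component $\mathcal{C}_{1}^{4}$, whose chambers I would enumerate using Propositions 4.2, 4.4, 4.6 and 4.7, retaining only those of rotation number $1$. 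The partitions of $4$ into two positive parts are $1+3$ and $2+2$, and since $\gcd(2,4)=2$ only $1+3$ survives, giving one chamber of ``cylinder'' type and one of ``balanced degenerate'' type. For ``triangle'' type one checks that $\mathcal{E}_{4}=\mathcal{E}_{4}^{1}$ (the only nonzero class of $\mathbb{Z}/4\mathbb{Z}$ divisible by $2$ is $\overline{2}$, and $\overline{2}+\overline{2}+\overline{2}\neq\overline{0}$) and that $\mathcal{E}_{4}$ is a single $Gr(S,T)$-orbit, represented by $(1,1,2)$; hence $\mathcal{B}_{4}$ is a singleton and there is one chamber of ``triangle'' type. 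For ``unbalanced degenerate'' type, among the ordered tripartitions $(1,1,2)$, $(1,2,1)$, $(2,1,1)$ of $4$ the first and third represent the same chamber by Proposition 4.7, while $(1,2,1)$ has $x=z$, so its pair of opposite angles coincides and it is in fact the ``balanced degenerate'' chamber $1+3$ already counted; this leaves exactly one chamber of ``unbalanced degenerate'' type. Thus $\mathcal{C}_{1}^{4}$ has four chambers: one ``cylinder'', one ``triangle'', and two ``degenerate'' (one balanced, one unbalanced).

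Next I would obtain $\mathcal{G}_{4}$ from Proposition 4.11. The ``balanced degenerate'' chamber $1+3$ meets the ``cylinder'' chamber $1+3$ along one boundary component and, since $3>1$, the ``triangle'' chamber with tripartition $1+1+2$ along the other; the ``unbalanced degenerate'' chamber meets the ``triangle'' chambers attached to the triplets $(1,1,2)$ and $(2,1,1)$, but those lie in the same class of $\mathcal{B}_{4}$, so both of its boundary components abut the unique ``triangle'' chamber. Hence $\mathcal{G}_{4}$ is the path (cylinder)--(balanced)--(triangle) with an additional double edge joining (triangle) to (unbalanced). All four chambers are topological disks, so the discriminant gives a cell decomposition with $F=4$ faces; its edges are the boundary components, and since each is shared by exactly two chambers (Proposition 4.11) there are $E=\tfrac{1}{2}(1+3+2+2)=4$ of them. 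Filling in the cusps yields a closed connected orientable surface whose Euler characteristic, because here $E=F$, equals the number $V\geq 1$ of vertices lying on the discriminant; since $\chi=2-2g$ must be even and at most $2$, this forces $V=2$, $\chi=2$ and $g=0$. Finally, those two vertices are cusps, and adding the unique cusp in the interior of the ``cylinder'' chamber (Proposition 4.2)---while noting that the three remaining chambers, being non-exceptional, contain no cusp and no orbifold point in their interior---we obtain exactly three punctures and no orbifold structure, so $\mathcal{C}_{1}^{4}$ is a sphere with three punctures.

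The delicate step is the chamber enumeration, specifically recognising that the tripartition $(1,2,1)$ is of ``balanced'' and not ``unbalanced'' type (so it is not counted twice), and that the two triplets $(1,1,2)$, $(2,1,1)$ labelling the ``unbalanced'' chamber fall into a single $Gr(S,T)$-class---which is exactly what makes both of its boundary components return to the same ``triangle'' chamber. Once the list of chambers and the graph $\mathcal{G}_{4}$ are pinned down, the genus and the number of punctures are forced by the Euler characteristic.
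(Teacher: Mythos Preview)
Your proof is correct and follows the paper's own strategy: identify the two components via Boissy's classification and Theorems~2.1--2.2, enumerate the chambers of the principal component from Propositions~4.2, 4.4, 4.6 and 4.7, read off the adjacency, and compute the Euler characteristic of the resulting cell decomposition. Two small remarks: your citation of ``Proposition~4.11'' for the adjacency relations is a slip (that is the statement being proved---you mean Proposition~4.8); and whereas the paper pins down the two boundary cusps by explicitly tracking which ones get identified around the chambers, you instead infer $V=2$ from the parity constraint $\chi=V=2-2g$ together with $1\le V\le 2$, which is a tidy shortcut that the paper does not use.
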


\begin{proof}
Following Theorem 2.2, there are two connected components in $\mathbb{P}\mathcal{H}(4,-4)$ that correspond respectively to the rotation numbers $1$ and $2$. Following Theorem 2.3, connected component $\mathcal{C}^{4}_{2}$ is isomorphic to $\mathbb{P}\mathcal{H}(2,-2)$. In particular, their walls-and-chambers structures are isomorphic.\newline
Next, we focus on the connected component $\mathcal{C}^{4}_{1}$. Following Proposition 4.2, there is a unique chamber of "cylinder" type because there is a unique bipartition $1+3=4$ that satisfies the coprimality condition. Likewise, following Proposition 4.4, there is a unique tripartition $1+1+2=4$ so there is a unique chamber of "triangle" type. Therefore, there is a chamber of "degenerate" type that connects the two latter chambers. There is also another chamber of "degenerate" type that connects the two remaining boundaries of the chamber of "triangle" type. Indeed, two chambers of "degenerate" type cannot be directly adjacent. We get $\mathcal{G}_{4}$ in Figure 1 as graph of adjacentness of the chambers.\newline
Since there is a unique puncture in the boundary of the chamber of "cylinder" type, it is also the same puncture in the boundary of the next chamber of "degenerate" type and one of the two vertices in the boundary of the chamber of "triangle" type. However, the remaining vertex of the boundary of the chamber of "triangle" type is not identified with the others. We add the puncture that belong to the interior of the chamber of "cylinder" type and get three distinct punctures. There are $4$ chambers, $2$ punctures (one of the three is not used in the cell decomposition) and $4$ arc components of the discriminant. Therefore, the Euler characteristic is $2$ and the component is a topological sphere.
\end{proof}

\subsection{Graph of chambers in connected components of strata}

Just like in the previous subsection, we consider only principal connected components of strata. Other components are isomorphic to such components for a different stratum (see Theorem 2.3).\newline
Since adjacency relations have been reduced to purely combinatorial issues, there is a systematic process to draw the graph of adjacency of chambers of a connected component of $\mathbb{P}\mathcal{H}(a,-a)$.\newline

Most of the form of the graph is provided by the relations between chambers of "triangle" type. Hopefully, this graph is easy to handle.

\begin{defn}
We denote by $\mathcal{T}_{a}$ the graph such that:\newline
- the vertices are the chambers of "triangle" type of the principal connected component of $\mathbb{P}\mathcal{H}(a,-a)$.\newline
- the edges are the chambers of "unbalanced degenerate" type that relate two chambers of "triangle" type.
\end{defn}

The graphs $\mathcal{T}_{a}$ are very regular, see Figure 5. They are Schreier coset graphs. In other words, they are given by the quotient of a finite group by a subgroup.\newline
It should be recalled that $\mathcal{E}_{a}^{1}$ is the set of triplets $(x,y,z)$ of elements of $\mathbb{Z}/a\mathbb{Z}$ such that we have:\newline
(i) $x,y,z \neq 0$,\newline
(ii) $x+y+z=0$,\newline
(iii) $gcd(x,y,z)=1$.\newline
In addition of the operators $S:(x,y,z)\rightarrow (-x,-y,-z)$ and $T:(x,y,z) \rightarrow (y,z,x)$ (see Definition 4.3), we also define linear application $U:(x,y,z) \rightarrow (x+z,y,x-z)$.

\begin{figure}
\includegraphics[scale=0.3]{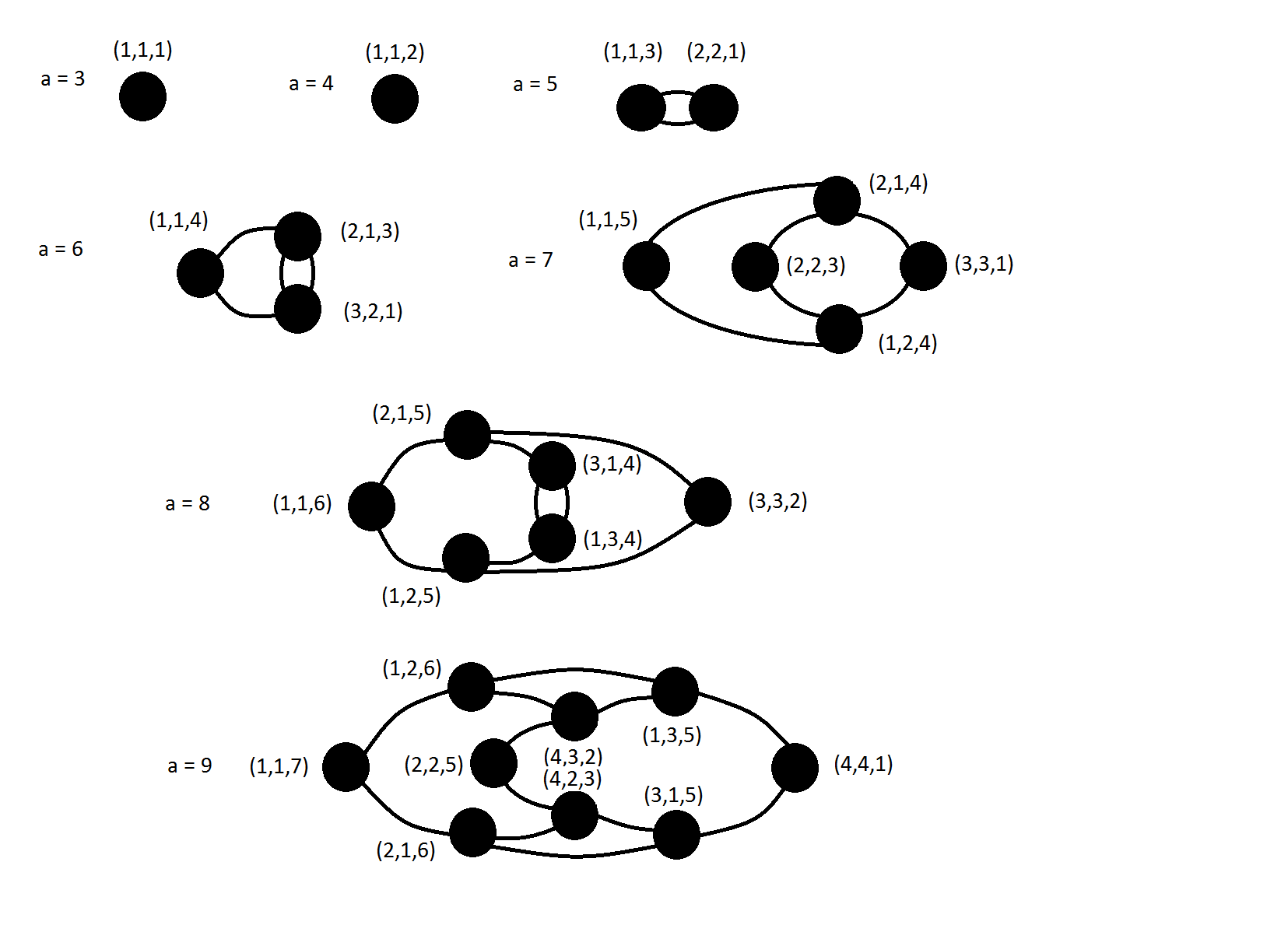}
\caption{The first elements of the family of graphs $\mathcal{T}_{a}$.}
\end{figure}

\begin{prop}
For any $a \geq 3$, graph $\mathcal{T}_{a}$ is isomorphic to the graph defined by:\newline
(i) $\mathcal{B}_{a}^{1} = \mathcal{E}_{a}^{1} / Gr(S,T)$ as set of vertices,\newline
(ii) an edge between two classes $\overline{\alpha}$ and $\overline{\beta}$ of $\mathcal{B}_{a}^{1}$ such that there are representatives $\alpha$ and $\beta$ in $\mathcal{E}_{a}^{1}$ with $\beta=U(\alpha)$.\newline
$Gr(S,T,U)$ acts transitively on $\mathcal{E}_{a}^{1}$. Therefore, $\mathcal{T}_{a}$ is a Schreier coset graph.
\end{prop}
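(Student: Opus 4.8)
The plan is to translate the combinatorics of Section~4 into a group action on $\mathcal{E}_a^1$ and then reduce the transitivity claim to a standard fact about $SL_2(\mathbb{Z}/a\mathbb{Z})$.

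The identification of the vertices is a restatement of Proposition~4.4: a triangle chamber corresponds to a cyclic tripartition of $a$, and it lies in the principal component precisely when the $\gcd$ of that tripartition is $1$, so the vertex set is $\mathcal{E}_a^1/Gr(S,T)=\mathcal{B}_a^1$. For the edges I would use Propositions~4.7 and~4.8: an unbalanced degenerate chamber shares a boundary component with exactly two triangle chambers, and the two ordered tripartitions it carries (Proposition~4.7) are the invariants of those two chambers. Rewriting the ``same-chamber'' rule of Proposition~4.7 in $\mathbb{Z}/a\mathbb{Z}$-coordinates, using $x+y+z\equiv 0$, shows that passing from $(x,y,z)$ to $(x+y,z-x,x)$ is, up to $Gr(S,T)$, the operator $U$, and applying $U$ to the three cyclic representatives $\alpha,T\alpha,T^2\alpha$ of a vertex $\overline\alpha$ produces all edges at $\overline\alpha$. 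Whether a boundary component of a triangle chamber actually yields an edge of $\mathcal{T}_a$ or is instead shared with a \emph{balanced} degenerate chamber (Proposition~4.6) is governed by the ``opposite angles distinct from $\pi$ are equal'' condition of Proposition~4.7, which translates precisely into ``the corresponding image under $U$ has a zero coordinate'', i.e.\ leaves $\mathcal{E}_a^1$; so the edge set is exactly the one in~(ii).

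For the transitivity statement, observe that $S,T,U$ are invertible, unimodular over $\mathbb{Z}$, and preserve both the plane $P=\{x+y+z\equiv 0\}\subset(\mathbb{Z}/a\mathbb{Z})^3$ and the decomposition $\mathcal{E}_a=\bigcup_{k\mid a}\mathcal{E}_a^k$; hence it is enough to show $Gr(S,T,U)$ is transitive on the single stratum $\mathcal{E}_a^1$. I would identify $P\cong(\mathbb{Z}/a\mathbb{Z})^2$ by $(x,y,z)\mapsto(x,y)$; then $S,T,U$ become matrices of determinant $1$, so $Gr(S,T,U)\subseteq SL_2(\mathbb{Z}/a\mathbb{Z})$, and a short explicit computation realizes the elementary transvections $\bigl(\begin{smallmatrix}1&0\\1&1\end{smallmatrix}\bigr)$ and $\bigl(\begin{smallmatrix}1&-1\\0&1\end{smallmatrix}\bigr)$ as short words in $S,T,U$. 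Since these transvections generate $SL_2(\mathbb{Z}/a\mathbb{Z})$, we get $Gr(S,T,U)=SL_2(\mathbb{Z}/a\mathbb{Z})$. Under the identification $\mathcal{E}_a^1$ is exactly the set of unimodular rows of $(\mathbb{Z}/a\mathbb{Z})^2$ (since $\gcd(x,y,z)=\gcd(x,y)$), and $SL_2(\mathbb{Z}/a\mathbb{Z})$ acts transitively on unimodular rows because every such row extends to an $SL_2$-matrix; so $Gr(S,T,U)$ carries any element of $\mathcal{E}_a^1$ to any other. This makes the graph on $\mathcal{E}_a^1$ with edges given by $U$ a Schreier coset graph of $SL_2(\mathbb{Z}/a\mathbb{Z})$ relative to a point stabilizer, and $\mathcal{T}_a$ is obtained from it by the $Gr(S,T)$-quotient, which is the assertion.

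I expect the conceptual heart of the argument to be this last step: recognizing $Gr(S,T,U)$ as $SL_2(\mathbb{Z}/a\mathbb{Z})$ and invoking transitivity on unimodular rows — after that the conclusion is formal. The part most prone to error in the write-up is not a calculation but the edge bookkeeping: one must keep track of the fact that an unbalanced degenerate chamber is recovered from either of its two tripartitions (so each edge of $\mathcal{T}_a$ is produced twice by the rule above), and that each element of $\mathcal{E}_a$ has a canonical lift to a tripartition of $a$ rather than $2a$; both are routine once the dictionary with Proposition~4.7 is set up.
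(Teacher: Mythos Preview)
Your treatment of the vertex and edge identification follows the paper closely (indeed with more care than the paper about when $U$ lands outside $\mathcal{E}_a$, which is exactly the balanced--degenerate case). The genuine difference is in the transitivity claim. The paper does \emph{not} compute anything: it simply observes that if $Gr(S,T,U)$ failed to act transitively on $\mathcal{E}_a^1$, then the chamber graph of the principal component would be disconnected, contradicting Boissy's classification of connected components of $\mathcal{H}(a,-a)$. So the paper imports transitivity from an external structural result.

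Your route is an honest direct proof: identify the plane $\{x+y+z=0\}$ with $(\mathbb{Z}/a\mathbb{Z})^2$, recognise $S,T,U$ as determinant-one matrices, produce the elementary transvections as words in them (e.g.\ $T^{2}U$ already gives $\left(\begin{smallmatrix}1&0\\1&1\end{smallmatrix}\right)$ once one uses the Prop.~4.7 formula $(x,y,z)\mapsto(x+y,z-x,x)$ for $U$), conclude $Gr(S,T,U)=SL_2(\mathbb{Z}/a\mathbb{Z})$, and invoke transitivity on unimodular rows. This is correct and buys you something the paper's argument does not: it makes the proposition independent of Boissy's theorem and in fact yields an alternative, purely combinatorial proof that the rotation-number-one locus is connected. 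The paper's approach, by contrast, is a one-line citation. One caveat worth flagging in your write-up: the map $U$ as literally defined in the text, $(x,y,z)\mapsto(x+z,y,x-z)$, does not preserve the plane $x+y+z=0$; you are right to work instead with the Prop.~4.7 formula, but you should say explicitly that you are correcting a typo rather than hiding the discrepancy behind ``up to $Gr(S,T)$''.
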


\begin{proof}
It has been proven in Proposition 4.4 that chambers of "triangle" type in the principal connected component are in correspondance with elements of $\mathcal{B}_{a}^{1} = \mathcal{E}_{a}^{1} / Gr(S,T)$. These chambers are connected by a chamber of "unbalanced degenerate" type if and only if the invariant of one chamber is represented by $(x,y,z)$ and the other by $((x+y),y,(z-x))$ or $(z,(x-z),(y+z)$ depending on the sign of $x-z$. In other words, a representative (up to cyclic permutation and inversion) of the discrete invariant of each chamber should be related by the linear map $U$.\newline
The action of $Gr(S,T,U)$ acts transitively on $\mathcal{E}_{a}^{1}$ because otherwise, there would be several connected components with rotation number equal to one (the classification of connected components has been established by other means in \cite{Bo}).
\end{proof}

While the graph defined by the chambers of "triangle" type is very regular, the actual graph of chambers of the connected components is a little more complicated, see Figure 6. For the picture of the graphs of chambers for small values of $a$, see Figure 1.

\begin{figure}
\includegraphics[scale=0.3]{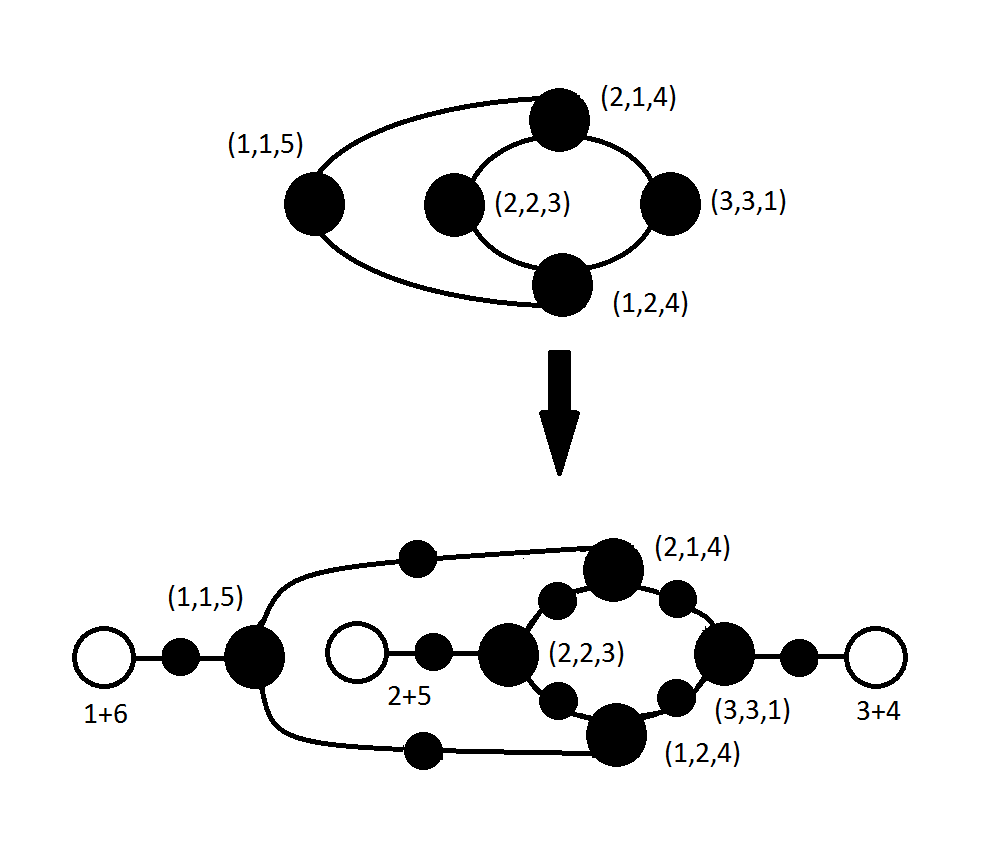}
\caption{$\mathcal{T}_{7}$ and $\mathcal{G}_{7}$. Small black disks correspond to chambers of "degenerate type and the white circle corresponds to a chamber of "cylinder" type.}
\end{figure}

\begin{thm}
The graph of chambers $\mathcal{G}_{a}$ of the principal connected component of $\mathbb{P}\mathcal{H}(a,-a)$ is given by the two following operations:\newline
(i) adding a leaf (a vertex and an edge) on every vertex of $\mathcal{T}_{a}$ of the form $(a-2\theta,\theta,\theta$ (corresponding to chambers of "cylinder" type).\newline
(ii) inserting a vertex (corresponding to chambers of "degenerate" type) on every edge to get $\mathcal{G}_{a}$.
\end{thm}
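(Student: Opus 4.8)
The plan is to reconstruct $\mathcal{G}_a$ from the ground up, using the complete list of chamber types (Proposition 4.1), the explicit combinatorial description of each type (Propositions 4.2, 4.4, 4.6 and 4.7), and the adjacency dictionary (Proposition 4.8). Recall that the vertices of $\mathcal{G}_a$ are the chambers of the principal connected component of $\mathbb{P}\mathcal{H}(a,-a)$, and its edges are the boundary components of the walls-and-chambers structure, each of which is shared by exactly two chambers. By Proposition 4.8, every boundary component joins a chamber of ``degenerate'' type to a chamber of one of the two other types, so $\mathcal{G}_a$ is bipartite, with the degenerate chambers on one side and the ``cylinder'' and ``triangle'' chambers on the other. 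Furthermore, a cylinder chamber has a single boundary component (Proposition 4.2) and is hence a leaf of $\mathcal{G}_a$, an unbalanced degenerate chamber has exactly two (Proposition 4.7), and a balanced degenerate chamber has one or two (Proposition 4.6), so every degenerate chamber is a leaf or a degree-two vertex.

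First I would deal with the triangle chambers. By Proposition 4.8 the two boundary components of an unbalanced degenerate chamber join it to two triangle chambers; hence, by Definition 4.12, the unbalanced degenerate chambers are exactly the edges of $\mathcal{T}_a$, each realized in $\mathcal{G}_a$ as a degree-two vertex adjacent to the two triangle chambers it joins. Therefore the subgraph of $\mathcal{G}_a$ spanned by the triangle chambers together with the unbalanced degenerate chambers is precisely $\mathcal{T}_a$ with a vertex inserted on each of its edges --- this is operation (ii) applied to the edges already present in $\mathcal{T}_a$.

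Next I would attach the cylinder chambers. Fix a cylinder chamber; by Proposition 4.2 its invariant is an unordered partition $a=\theta+(a-\theta)$ with $\gcd(\theta,a)=1$ (the principal component has rotation number one), and we may take $\theta<a/2$; in particular $\theta\neq a-\theta$, since $\gcd(a/2,a)=a/2>1$ for $a\geq 3$. By Proposition 4.8 the unique boundary component of this chamber is shared with the balanced degenerate chamber attached to the same partition, and --- because $a\geq 3$ also forbids the partition $\frac{a}{2}+\frac{a}{2}$ in the principal component --- that balanced degenerate chamber has a second boundary component (Proposition 4.6), which by Proposition 4.8 is shared with the triangle chamber whose cyclic tripartition is $(\theta,\theta,a-2\theta)$. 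This triple has all entries nonzero and greatest common divisor equal to one; moreover for $a\geq 4$ the equality $a=3\theta$ is incompatible with $\gcd(\theta,a)=1$, so the triple is not the totally symmetric one and is a genuine vertex of $\mathcal{T}_a$, of the form $(a-2\theta,\theta,\theta)$. Conversely, every vertex of $\mathcal{T}_a$ of that shape comes from exactly one cylinder chamber (the partitions $\theta$ and $a-\theta$ yield the same chamber and the same cyclic tripartition class). Thus, for each vertex $(a-2\theta,\theta,\theta)$ of $\mathcal{T}_a$ we obtain a pendant path of length two hanging off it --- triangle chamber, then balanced degenerate chamber, then cylinder chamber --- which is exactly the effect of operation (i), adding the cylinder chamber as a leaf, followed by operation (ii) applied to the new leaf-edge, subdividing it by the balanced degenerate chamber.

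Since Propositions 4.1, 4.2, 4.4, 4.6 and 4.7 show that these four families exhaust all chambers of the principal component, and Proposition 4.8 shows that every boundary component has been used exactly once in the two steps above, performing operations (i) and (ii) on $\mathcal{T}_a$ yields exactly $\mathcal{G}_a$. The main obstacle is the careful bookkeeping needed to rule out from the principal component, when $a\geq 4$, the chambers carrying extra symmetry --- the cylinder and balanced degenerate chambers attached to $\frac{a}{2}+\frac{a}{2}$ and the triangle chamber $(b,b,b)$ when $a=3b$ --- which is where the rotation-number constraints in Propositions 4.2, 4.4 and 4.6 are essential. The borderline value $a=3$ is also covered: there $\mathcal{T}_3$ is the single vertex $(1,1,1)=(a-2\theta,\theta,\theta)$ with $\theta=1$, and operations (i)--(ii) reproduce the three-chamber path described in Proposition 4.10, while $a=2$ lies outside the scope of the theorem and is treated in Proposition 4.9.
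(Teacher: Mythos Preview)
Your proof is correct and follows essentially the same approach as the paper's: you use Proposition~4.8 to identify the unbalanced degenerate chambers with the edges of $\mathcal{T}_{a}$ and the balanced degenerate chambers as the subdivision points of the pendant edges attaching cylinder chambers to triangle vertices of the form $(a-2\theta,\theta,\theta)$. Your version is considerably more detailed than the paper's, in particular in checking that the exceptionally symmetric chambers (the $\tfrac{a}{2}+\tfrac{a}{2}$ balanced degenerate chamber and the $(b,b,b)$ triangle chamber) are absent from the principal component for $a\geq 4$, and in handling the borderline case $a=3$ explicitly; the paper leaves these points implicit.
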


\begin{proof}
Following Proposition 4.8, chambers of "cylinder" type corresponding to bipartitions $k+k'$ are connected by a chamber of "balanced degenerate" type to a chamber of "triangle" type whose discrete invariant is $k+k+(k'-k)$. Adding a leaf on every vertex corresponding to tripartitions of the form $(a-2\theta)+\theta+\theta)$ provides a graph where chambers of "cylinder" and "triangle" type are represented. Inserting a vertex on every edge provides a graph where the vertices correspond to chambers and edges correspond to boundary components. 
\end{proof}

\section{Counting cells in the connected components}
 
In the following, we give formulas to provide combinatorial information on strata. We denote by $S_{a}$ the number of connected components of the discriminant in $\mathbb{P}\mathcal{H}(a,-a)$. We also denote by $F_{a}$ the number of connected components of the discriminant in the connected component of $\mathbb{P}\mathcal{H}(a,-a)$ whose rotation number is one. Since $\mathcal{H}(1,-1)$ is empty, we set $F_{1}=S_{1}=0$\newline

\begin{prop} For any $a\geq2$, we have
$S_{a}=\lfloor \frac{a}{2} \rfloor + \dfrac{(a-1)(a-2)}{2}$ and
$F_{a}=\sum \limits_{d \vert a} \mu(a/d).S_{d}$ where $\mu$ is the classical Möbius function.
\end{prop}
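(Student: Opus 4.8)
The plan is to count the connected components of the discriminant directly from the combinatorial description of chambers and boundary components established in Section~4, and then to pass from the whole stratum to the principal component by M\"obius inversion. Recall from Proposition~4.8 that every connected component of the discriminant is a boundary component separating a chamber of ``degenerate'' type from a chamber of one of the two other types, and conversely each chamber of ``degenerate'' type accounts for its boundary components. So $S_a$ equals the number of boundary components of ``balanced degenerate'' chambers plus the number of boundary components of ``unbalanced degenerate'' chambers, counted in all of $\mathbb{P}\mathcal{H}(a,-a)$.

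First I would count the contribution of ``balanced degenerate'' chambers. By Proposition~4.6 these are indexed by unordered partitions $a=k+k'$ with $k,k'\geq1$, of which there are $\lfloor a/2\rfloor$; the exceptional one ($a$ even, $k=k'=a/2$) has a single boundary component while all others have two. Meanwhile, by Proposition~4.8, one of those two boundary components is shared with the corresponding ``cylinder'' chamber and the other with a ``triangle'' chamber. The cleanest bookkeeping is to count each connected component of the discriminant once, attributing it to the degenerate chamber it bounds. Then I would count the ``unbalanced degenerate'' chambers: by Proposition~4.7 each is given by a pair of ordered tripartitions $x+y+z=a$ and has exactly two boundary components, and distinct such chambers give disjoint pieces of the discriminant. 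After carefully removing the double-counting coming from the pairing of tripartitions in Proposition~4.7 and from the coincidences with ``cylinder''/``triangle'' adjacencies, the total should collapse to $\lfloor a/2\rfloor + \frac{(a-1)(a-2)}{2}$; the term $\frac{(a-1)(a-2)}{2}$ is exactly the number of ordered pairs counting ``triangle''-type data appropriately, which one recognizes as $\binom{a-1}{2}$, matching the number of chambers of ``triangle'' type together with the adjacency structure of $\mathcal{T}_a$.

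For the formula for $F_a$, I would use Theorem~2.2: the connected component of $\mathbb{P}\mathcal{H}(d,-d)$ with rotation number $1$ sits inside $\mathbb{P}\mathcal{H}(a,-a)$ (for $d\mid a$) as the union of chambers whose combinatorial invariants, scaled by $a/d$, reproduce the chambers of the principal component of $\mathbb{P}\mathcal{H}(d,-d)$ with rotation number $1$. Since every chamber of $\mathbb{P}\mathcal{H}(a,-a)$ lies in exactly one connected component, and by the classification (Proposition~4.2, 4.4, 4.6, 4.7) the rotation number of a chamber is the gcd of the entries of its invariant, the boundary components of the discriminant of $\mathbb{P}\mathcal{H}(a,-a)$ partition according to that gcd. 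Hence $S_a = \sum_{d\mid a} F_d$ (where $F_d$ counts boundary components in the rotation-number-one component), and M\"obius inversion gives $F_a = \sum_{d\mid a}\mu(a/d)\,S_d$, using $F_1=S_1=0$ consistently.

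The main obstacle will be the precise accounting in the count of $S_a$: one must make sure that each connected component of the discriminant is counted exactly once, correctly handling (a) the two-tripartition redundancy for ``unbalanced'' chambers in Proposition~4.7, (b) the exceptional single-boundary ``balanced degenerate'' chamber when $a$ is even, and (c) the fact that ``cylinder''-adjacent and ``triangle''-adjacent boundary components are already subsumed in the degenerate-chamber count. It is a finite but delicate inclusion-exclusion; I expect the cleanest route is to count the set of boundary components directly as a set of explicit combinatorial data (a pair of opposite-angle magnitudes of a degenerate surface, up to the symmetries identified in Propositions~4.6 and 4.7) and then verify the closed form by a short generating-function or direct summation argument.
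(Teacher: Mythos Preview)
Your approach to $F_a$ via M\"obius inversion is exactly what the paper does, and your justification through the stratification of chambers by rotation number is correct.

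For $S_a$, however, you are taking an unnecessarily indirect route and you do not actually carry out the count --- you only assert that after ``carefully removing the double-counting'' everything ``should collapse'' to the stated formula. The difficulty you flag (items (a)--(c)) is real: if you tally boundary components of degenerate chambers, a balanced chamber with invariant $k+k'$ contributes one wall with angle data $\pi,2k\pi,\pi,2k'\pi$ \emph{and} one wall with angle data $2k\pi,\pi,2k\pi,(2(k'-k)+1)\pi$, so balanced chambers feed into \emph{both} summands of the claimed formula, and you would still have to count unbalanced chambers via the pairing of Proposition~4.7. None of this is done.

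The paper avoids all of this bookkeeping by parametrizing the connected components of the discriminant directly by their angle data rather than by the adjacent degenerate chamber. A surface on the discriminant has degenerate core and four angles in cyclic order, one of which equals $\pi$; within a component only the length ratio of the two saddle connections varies. There are then exactly two cases. If a pair of opposite angles both equal $\pi$, the remaining pair is $2k\pi,2k'\pi$ with $k+k'=a$, giving $\lfloor a/2\rfloor$ components. If only one angle equals $\pi$, the remaining three angles are $2x\pi,(2y+1)\pi,2z\pi$ for an ordered tripartition $x+y+z=a$ (orientation of the translation structure prevents the reflection $x\leftrightarrow z$ from identifying components), giving $\binom{a-1}{2}=\tfrac{(a-1)(a-2)}{2}$ components. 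This yields the formula in two lines, with no inclusion--exclusion. I would recommend you redo the $S_a$ count this way.
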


In particular, when $a$ is equal to a prime number $p\geq5$, there are $\frac{(p-1)^{2}}{2}$ connected components in the discriminant of the stratum (which is connected).

\begin{proof}
Flat surfaces that belong to the discriminant have a degenerate core and one of the four angles of the domain of the pole is $\pi$. Connected components of the discriminant are parametrized by the lengths of the saddle connections and correspond to choices of angles. In some surfaces, there is a pair of opposite angles whose magnitude is equal to $\pi$. There is a connected component for every choice of angles that are multiple of $2\pi$ for the other pair of opposite angles. Such connected components are in bijection with bipartitions of $a$ into two integers. There are $\lfloor\dfrac{a}{2}\rfloor$ of them. For connected components where the opposite angle is not equal to $\pi$, there is a connected component for every ordered partition of $a$ into three parts. Therefore, there are $\sum \limits_{k=1}^{a-2} (a-k-1)=(a-1)(a-2)-\dfrac{(a-1)(a-2)}{2}=\dfrac{(a-1)(a-2)}{2} $ of them.\newline
By definition, $S_{a}=\sum \limits_{d \vert a} F_{d}$. By the Möbius inversion formula, we get $F_{a}=\sum \limits_{d \vert a} \mu(a/d).S_{d}$.
\end{proof}

We compute the number of chambers of each type in the principal connected component:\newline
(i) $CT_{a}$ is the number of chambers of "triangle" type.\newline
(ii) $DT_{a}$ is the number of chambers of "degenerated" type.\newline
(iii) $CC_{a}$ is the number of chambers of "cylinder" type.\newline
We denote by $C_{a}$ the number of chambers in the principal connected component. It should be recalled that $\phi$ is Euler's totient function

\begin{prop} For any $a\geq4$, we have:\newline
(i) $DT_{a}=\dfrac{1}{2}F_{a}$.\newline 
(ii) $CC_{a}=\frac{1}{2}\phi(a)$.\newline
(iii) $CT_{a}=\dfrac{1}{3}F_{a}-\frac{1}{6}\phi(a)$.\newline
(iv) $C_{a}=\dfrac{5}{6}F_{a}+\frac{1}{3}\phi(a)$.
\end{prop}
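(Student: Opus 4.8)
The plan is to derive all four identities from a single double count of the connected components of the discriminant, once the exceptional chambers have been cleared away. First I would observe that for $a\geq4$ the principal component of $\mathbb{P}\mathcal{H}(a,-a)$ contains none of the symmetric chambers singled out in Propositions 4.4 and 4.6: the ``balanced degenerate'' chamber with partition $\tfrac a2+\tfrac a2$ has rotation number $\gcd(\tfrac a2,a)=\tfrac a2$, and the ``triangle'' chamber with tripartition $(b,b,b)$ for $a=3b$ has rotation number $b$, and both of these equal $1$ only when $a\in\{2,3\}$. Hence, in the principal component with $a\geq4$, every chamber of ``degenerate'' type is bounded by exactly two connected components of the discriminant, every chamber of ``triangle'' type by exactly three, and every chamber of ``cylinder'' type by exactly one (Propositions 4.2, 4.4, 4.6 and 4.7).

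Next, for (ii), I would count the cylinder chambers directly. By Proposition 4.2 they are indexed by the unordered partitions $a=k+k'$ with $k,k'\geq1$ that lie in the principal component, that is with $\gcd(k,a)=1$; since $\gcd(k,a)=\gcd(a-k,a)$, these correspond to the residues $k\in\{1,\dots,a-1\}$ coprime to $a$, modulo the involution $k\mapsto a-k$, which has no fixed point once $a\geq4$. There are $\tfrac12\phi(a)$ of them, so $CC_a=\tfrac12\phi(a)$.

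For (i) and (iii), I would double count the $F_a$ connected components of the discriminant of the principal component. By Proposition 4.8 each of them lies between a unique chamber of ``degenerate'' type and a unique chamber of ``cylinder'' or ``triangle'' type; in particular the two sides are never the same chamber, so summing over all degenerate chambers the number of discriminant components on their boundary recovers $F_a$, and likewise summing over the non-degenerate chambers. By the first paragraph the first sum equals $2\,DT_a$, giving $F_a=2\,DT_a$, which is (i); the second sum equals $3\,CT_a+CC_a$, giving $F_a=3\,CT_a+CC_a$, and with (ii) this is $CT_a=\tfrac13F_a-\tfrac16\phi(a)$, which is (iii). Finally (iv) follows by addition: $C_a=CT_a+DT_a+CC_a=\bigl(\tfrac13+\tfrac12\bigr)F_a+\bigl(-\tfrac16+\tfrac12\bigr)\phi(a)=\tfrac56F_a+\tfrac13\phi(a)$.

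The crux, and really the only place where care is needed, is the first paragraph: one must be sure that the rotation-number bookkeeping of Propositions 4.4 and 4.6 genuinely removes every chamber with fewer than the generic number of boundary arcs from the principal component as soon as $a\geq4$, since a single such chamber (a cylinder-type leaf or a symmetric triangle chamber contributing only one boundary arc instead of one or three respectively) would break the clean counts $F_a=2\,DT_a$ and $F_a=3\,CT_a+CC_a$. Once the generic boundary counts $2$, $3$ and $1$ are in place, the remainder is immediate arithmetic in $F_a$ and $\phi(a)$.
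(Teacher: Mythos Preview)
Your proof is correct and follows essentially the same approach as the paper: both derive (i) from the fact that every degenerate chamber has two boundary arcs, (ii) from the direct count of partitions $a=k+k'$ with $\gcd(k,a)=1$, (iii) from the relation $3\,CT_a=F_a-CC_a$, and (iv) by summation. You are in fact more explicit than the paper about why the symmetric degenerate and triangle chambers (those with one boundary arc) are absent from the principal component once $a\geq4$, computing their rotation numbers $a/2$ and $a/3$; the paper only gestures at this by saying ``since we excluded the case $a=2$'' and ``since we excluded the case $a=3$''.
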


In particular, when $a$ is equal to a prime number $p\geq5$, there are $\dfrac{(p-1)(5p-1)}{12}$ chambers in the stratum (which is connected).
\begin{proof}
Since we excluded the case $a=2$, every chamber of "degenerate" type has exactly two boundary components. Therefore, we have $DT_{a}=\dfrac{1}{2}F_{a}$.\newline
In $\mathbb{P}\mathcal{H}(a,-a)$, there is a chamber of "cylinder" type for every bipartition of $a$. Since the rotation number of flat surfaces where the bipartition is $k+(a-k)$ is $gcd(k,a)$ and Euler's totient function counts coprime numbers, there are exactly $\frac{1}{2}\phi(a)$ chambers of "cylinder" type.\newline
Finally, every chamber of "triangle" type has three boundary components (since we excluded the case $a=3$) so $3CT_{a}=F_{a}-CC_{a}$. Therefore, we have $CT_{a}=\dfrac{1}{3}F_{a}-\frac{1}{6}\phi(a)$. Finally, we have $C_{a}=CT_{a}+CC_{a}+DT_{a}$.
This ends the proof.\newline
\end{proof}

Having the number of chambers and the number of connected components of the discriminant, the computation of the number of punctures leads directly to the Euler characteristic of the complex curve and the computation of its genus.

\begin{proof}[Proof of Theorem 2.4]
The cases $2 \leq a \leq 4$ are already settled in Propositions 4.9 to 4.11 so we assume $a \geq 5$.\newline
We first draw a distinction among the cusps of the principal connected component of $\mathbb{P}\mathcal{H}(a,-a)$. Some belong to the interior of chambers of "cylinder" type and the others separate the connected components of the discriminant. They belong to the boundary of the chambers. There is exactly one cusp in each of the $\frac{1}{2}\phi(a)$ chambers of "cylinder" type. We put them aside as of now.\newline
For every cusp that separates the connected components of the discriminant, there is a small closed loop around it that passes through several chambers. This loop defines a closed path in the graph $\mathcal{T}_{a}$ of chambers of "triangle" type. Indeed, the cusp that belongs to the boundary of a chamber of "cylinder" type of invariant $k+k'$ also belongs to the boundary of the chamber of "triangle" type of invariant $k+k+(k'-k)$. Therefore, we focus on the chambers of "triangle" type.\newline
For every cusp in the boundary of a chamber of "triangle" type, there are two vertices of the triangle that merge and one that remains alone. For a chamber of invariant $(x,y,z)$, we get a limit-surface of genus zero with a conical singularity of order $y-1$ and one other of degree $x+z-1$. A loop around a cusp means a path among chambers of "triangle" type where one of the three degrees of the invariant is constant. Therefore, this loop is a finite orbit in $\mathcal{E}_{a}^{1}$ under the action of $U$.\newline
In each of these orbits, the middle element $y$ of the triplet remains constant. Let $d(y)=gcd(y,a)$. The number $d(y)$ may be any divisor of $a$ different from $a$. For every such divisor, we compute the number of loops. That way, we will be able to compute the total number of cusps.\newline
There are $\phi(a/d)$ numbers $y$ such that $d(y)=d$. Besides, the length of the orbit under $U$ is $a/d$. The different orbits sweep the triplets whose $gcd$ is one. Therefore, there is a total of $\phi(d)$ different orbits for such an $y$. Since triplets are considered up to global inversion, we have to divide the total numbers of orbits by a factor $2$ to get the total number of cusps that belong to the discriminant. This procedure does not work when $a=4$ and therefore this case had to be settled separately.\newline
Finally, the number of cusps that belong to the discriminant is $-\frac{1}{2}\phi(a)+\dfrac{1}{2}\sum \limits_{d \vert a} \phi(d)\phi(a/d)$. Consequently, there are a total of $l=\dfrac{1}{2}\sum \limits_{d \vert a} \phi(d)\phi(a/d)$ punctures in the complex curve.\newline
We use the walls-and-chambers structure of the connected component to compute its genus. The chambers are the cells of dimension two, the connected components of the discriminant are the cells of dimension one and the cusps that separates the connected components of the discriminant are the vertices of a polyhedral decomposition.\newline
The Euler characteristic of this polyhedral decomposition is:\newline
$$\chi=C_{a}-F_{a}+\dfrac{1}{2}\sum \limits_{d \vert a} \phi(d)\phi(a/d)-\frac{1}{2}\phi(a)$$.\newline
Therefore, we have $\chi= \dfrac{1}{2}\sum \limits_{d \vert a} \phi(d)\phi(a/d)
-\dfrac{1}{6}F_{a}-\frac{1}{6}\phi(a)$.\newline
Since $\chi=2-2g$ for an orientable surface, we get $g=1+\dfrac{1}{12}(F_{a}+\phi(a))-\dfrac{1}{4}\sum \limits_{d \vert a} \phi(d)\phi(a/d)$.\newline
Following Proposition 5.1, we have $F_{a}=\sum \limits_{d \vert a} \mu(a/d).S_{d}$ where $\mu$ is the classical Möbius function and $S_{a}=\lfloor \frac{a}{2} \rfloor + \dfrac{(a-1)(a-2)}{2}$.\newline
So we have $
F_{a}+\phi(a)=
\sum \limits_{d \vert a} \mu(a/d).\lfloor \frac{d}{2} \rfloor
+ \sum \limits_{d \vert a} \mu(a/d).\dfrac{(d-1)(d-2)}{2}
+\phi(a)$.\newline
It is well known that $\phi(a)=
 \sum \limits_{d \vert a} \mu(a/d).d$. Therefore, we have:\newline
$$F_{a}+\phi(a)=
\sum \limits_{d \vert a} \mu(a/d).\left( \lfloor \frac{d}{2} \rfloor
 + \dfrac{(d-1)(d-2)}{2} + d \right).$$
We define function parity $\rho$ such that $\rho(d)=0$ if $d$ is even and $\rho(d)=1$ if $d$ is odd.\newline
Thus, we get:\newline
$$F_{a}+\phi(a)=
\sum \limits_{d \vert a} \mu(a/d).\dfrac{d^{2}}{2}+\sum \limits_{d \vert a} \mu(a/d).(1-\rho(d))$$
Finally, we have $F_{a}+\phi(a)= \sum \limits_{d \vert a} \mu(a/d).\dfrac{d^{2}}{2}=\dfrac{1}{2}\prod \limits_{p \vert a}(1-\dfrac{1}{p^{2}})$.\newline
Consequently, we have $g=1+\dfrac{a^{2}}{24}\prod \limits_{p \vert a}(1-\dfrac{1}{p^{2}})-\dfrac{1}{4}\sum \limits_{d \vert a} \phi(d)\phi(a/d)$.
\end{proof}
\nopagebreak
\vskip.5cm
\end{document}